\newtheorem{theorem}{Theorem}[section]
\newtheorem{proposition}[theorem]{Proposition}
\newtheorem{lemma}[theorem]{Lemma}
\newtheorem{corollary}[theorem]{Corollary}
\theoremstyle{definition}
\newtheorem{definition}[theorem]{Definition}
\newtheorem{examples}[theorem]{Example}
\theoremstyle{remark}
\newtheorem{remark}[theorem]{Remark}
\numberwithin{equation}{section}
\renewcommand{\epsilon}{\varepsilon}
\renewcommand{\phi}{\varphi}
\newcommand{\R}{\mathbb{R}}
\newcommand{\Tw}{\mathrm{Tw}\,}
\begin{document}

\title{Prime-localized Weinstein subdomains}
	
\author{Oleg Lazarev and Zachary Sylvan}

\begin{abstract}
    For any high-dimensional Weinstein domain and finite collection of primes, we construct a Weinstein subdomain whose wrapped Fukaya category is a localization of the original wrapped Fukaya category away from the given primes. When the original domain is a cotangent bundle, these subdomains form a decreasing lattice whose order cannot be reversed.  
    		
    Furthermore, we classify the possible wrapped Fukaya categories of Weinstein subdomains of a cotangent bundle of a simply connected, spin manifold, showing that they all coincide with one of these prime localizations. In the process, we describe which twisted complexes in the wrapped Fukaya category of a cotangent bundle of a sphere are isomorphic to genuine Lagrangians.
\end{abstract}

\maketitle

\section{Introduction}\label{sec: intro}
\thispagestyle{empty}

\subsection{Main results}\label{subsection: main_results}

One of the main problems in symplectic topology is to understand the set of Lagrangians $L$ in a symplectic manifold $X$. For example, Arnold's nearby Lagrangian conjecture states that  any  closed exact Lagrangians $L$ in $T^*M^n_{std}$ is Hamiltonian isotopic to the zero-section $M \subset T^*M_{std}$; by work \cite{FukSS, Abouzaid, kragh_parametrized_ring_spectra_NLC}
on this conjecture, all such Lagrangians are homotopy equivalent to $M^n$. Each closed exact Lagrangian $L \subset X$ gives a Liouville subdomain $T^*L$ of $X$ and the \textit{skeleton} of $T^*L$, the stable set of its Liouville vector field, is precisely $L$. More generally, any \textit{Weinstein} domain $V$ deformation retracts to a possibly singular Lagrangian skeleton. Therefore a  Weinstein \textit{subdomain} $V \subset X$ can be considered a singular Lagrangian in $X$.
In this paper, we consider the problem of constructing and classifying Weinstein subdomains of a fixed Weinstein domain, as well as the \textit{wrapped Fukaya categories} $\mathcal W(V; R)$ of such subdomains (here, $R$ is a commutative coefficient ring). We will only consider Weinstein subdomains $V \subset X$ with the stronger property that $X \backslash V$ is also a Weinstein cobordism, i.e. $V$ is the sublevel set of an ambient Weinstein Morse function on $X$; see \cite{CE12} for background on the geometry of Weinstein domains.
	
There is a (cohomologically) fully faithful embedding of $\mathcal{W}(X; R)$ into $\Tw\mathcal{W}(X; R)$, the category of \textit{twisted complexes} on $\mathcal{W}(X; R)$. Since $\Tw\mathcal{W}(X; R)$ is a formal algebraic enlargement of a geometric category, this functor is usually not a  quasi-equivalence. To understand which $A_\infty$-categories \emph{actually arise} from Weinstein subdomains, it turns out we will have to understand which twisted complexes come from actual geometric Lagrangians. In other words, we will largely be concerned with understanding the image of this embedding. We give examples when this functor is a quasi-equivalence (Proposition \ref{prop: disks_in_cotangent_disk}) and describe its image when $X = T^*S^n_{std}$ (Example \ref{ex: cotangent_sphere}); see Section \ref{subsection: outline}. This type of question about the geometricity of twisted complexes has previously been studied by \cite{HKK, Auroux_Smith_surfaces}.

Given a small $A_\infty$-category $\mathcal{C}$ over $\mathbb{Z}$ and set of objects $\mathcal{A}$ of $\mathcal{C}$, one can form the quotient $A_\infty$-category $\mathcal{C}/\mathcal{A}$, which comes with a localization functor $\mathcal{C} \rightarrow \mathcal{C}/\mathcal{A}$; see \cite{Lyubashenko--Manzyuk, Lyubashenko--Ovsienko}. In particular, given a collection of prime numbers $P \subset \mathbb{Z}$, one can form 	
\begin{equation}
    \mathcal{C}\left[\frac{1}{P}\right]:=\mathcal{C}/\left\{\mathrm{cone}(p\cdot\mathrm{Id}_L)\,\vert\, p\in P,L\in\mathcal{C}\right\}
\end{equation}
the localization of $\mathcal{C}$ away from the primes $P$.
Quotienting by $\mathrm{cone}(p \cdot Id_L)$ kills the object $\mathrm{cone}(p \cdot Id_L)$, which has the effect of making the morphism $p \cdot Id_L$ a quasi-isomorphism, i.e. inverting $p$. Hence if $\hom^*_{\mathcal{C} } (L, K)$ is a cochain complex of free Abelian groups, then $\hom^*_{\mathcal{C}[1/P]}(L,K)$ is quasi-isomorphic to $\hom^*_{\mathcal{C}}(L, K) \otimes_{\mathbb{Z}} \mathbb{Z}[\frac{1}{P}]$, which explains our notation $\mathcal{C}[\frac{1}{P}]$.
We will also allow $P$ to be empty or contain $0$, in which case  $\mathcal{C}[\frac{1}{P}]$ is  the original category $\mathcal{C}$ or the  trivial category, respectively.

Our first result is that any high-dimensional Weinstein domain has Weinstein subdomains whose Fukaya categories are localizations away from any finite collection of primes $P$. Furthemore, these subdomains are \textit{almost} symplectomorphic, i.e. their symplectic forms are homotopic through non-degenerate 2-forms, and 
hence indistinguishable 
from the point of view of classical smooth topology. We note that by Gromov's h-principle \cite{gromov_hprinciple} for open symplectic manifolds, any two almost symplectomorphic  Weinstein domains are actually homotopic through symplectic structures (but may not be symplectomorphic). 

\begin{theorem}\label{thm: exotic_subdomains}
	For any Weinstein domain $X^{2n}$ with $n \ge 5$ and finite collection of prime numbers $P$, that is possibly empty or contains $0$, 
	there is a Weinstein subdomain $X_{P} \subset X$ such that
	$\Tw\mathcal{W}(X_P; \mathbb{Z}) \cong \Tw\mathcal{W}(X; \mathbb{Z})[\frac{1}{P}]$ and the Viterbo transfer functor
	\[
	\mathcal V\colon \Tw\mathcal{W}(X; \mathbb{Z})\to \Tw\mathcal{W}(X_P; \mathbb{Z})
	\]
	is localization away from $P$. In particular, $\Tw\mathcal{W}(X_P; \mathbb{F}_p) = 0$ if $p \in P$ or $0\in P$, and $ \Tw\mathcal{W}(X_P; \mathbb{F}_p) \cong  \Tw\mathcal{W}(X; \mathbb{F}_p)$ otherwise. Furthermore, we can arrange that 
	\begin{enumerate}
		\item 
		The Weinstein cobordism $X \backslash X_P$ is smoothly  trivial and hence  $X_P$ is almost symplectomorphic to $X$. \\
		\item 
		If $Q \subset P$ or $0 \in P$, we can exhibit a Weinstein embedding
		$\phi_{P,Q}: X_P \hookrightarrow X_Q$ with the property that if $R \subset Q \subset P$, then $\phi_{P,Q} \circ \phi_{Q,R}$ is Weinstein homotopic to $\phi_{Q,R}$.
		\item If $P$ is empty, then $X_P$ is $X$. If $0 \in P$, then $X_P$ is the flexibilization $X_{flex}$ of $X$ and the Weinstein embedding $X_{P} \subset X$ is unique up to Weinstein homotopy. 
	\end{enumerate}
\end{theorem}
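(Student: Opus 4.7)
My plan is to construct $X_P$ as the complement $X \setminus C$ of a Weinstein cobordism $C \subset X$ whose critical handles have Lagrangian cocores that geometrically realize the twisted complexes $\mathrm{cone}(p\cdot\mathrm{Id}_L)$ for $p \in P$ and $L$ in a fixed finite generating collection of $\Tw\mathcal{W}(X;\mathbb{Z})$. Granting this, the Ganatra--Pardon--Shende description of the Viterbo transfer as the quotient by the thick subcategory generated by the cocores of $C$ immediately gives that $\mathcal{V} : \Tw\mathcal{W}(X;\mathbb{Z}) \to \Tw\mathcal{W}(X_P;\mathbb{Z})$ is the localization away from $P$, and the statement with $\mathbb{F}_p$-coefficients follows by base change.

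The technical heart is the geometric realization step: for each $p \in P$ and each generator $L$, I would produce a properly embedded Lagrangian disk $D_{p,L} \subset X$ with Legendrian boundary $\Lambda_{p,L} \subset \partial X$ whose class in $\Tw\mathcal{W}(X;\mathbb{Z})$ is $\mathrm{cone}(p\cdot\mathrm{Id}_L)$. My approach is to perform a controlled Lagrangian surgery among $p$ parallel copies of $L$ inside a Darboux-type neighborhood to build a disk that formally realizes the cone, and then invoke Murphy's h-principle to make the Legendrian boundary loose and hence freely isotopable. The case $0 \in P$ is handled separately: since $\mathrm{cone}(0\cdot\mathrm{Id}_L) = L \oplus L[1]$ in the quotient kills every object, one takes $X_P := X_{\mathrm{flex}}$, whose wrapped Fukaya category vanishes by Murphy--Siegel, with uniqueness of the embedding up to Weinstein homotopy following from flexibility. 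Once the $D_{p,L}$ are built, a generic Legendrian isotopy makes the $\Lambda_{p,L}$ pairwise disjoint, and a Weinstein tubular neighborhood of $\bigcup D_{p,L}$ defines $C$.

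The additional structural claims follow from a careful execution of the construction. Smooth triviality of $X \setminus X_P$ is achieved by pairing each critical handle of $C$ with a canceling subcritical handle, which is harmless for the wrapped Fukaya category and feasible in the dimension range $n \geq 5$ via the $h$-cobordism theorem. The nested family $\phi_{P,Q}\colon X_P \hookrightarrow X_Q$ for $Q \subset P$ (or $0 \in P$) is obtained inductively, by first carving $X_Q$ out of $X$ and then carving $X_P$ out of $X_Q$, producing nested cobordisms $C_Q \subset C_P$. The identity $\phi_{P,Q} \circ \phi_{Q,R} \simeq \phi_{P,R}$ up to Weinstein homotopy then follows from the essential uniqueness of the carving once generators and primes are fixed. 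The cases $P = \emptyset$ and $0 \in P$ are immediate from these descriptions.

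The main obstacle is the geometric realization of $\mathrm{cone}(p\cdot\mathrm{Id}_L)$: twisted complexes are formal algebraic objects in $\Tw\mathcal{W}$ with no a priori reason to come from embedded Lagrangians, and indeed a central theme of the paper is to understand which twisted complexes are so realized. The loose Legendrian h-principle supplies flexibility at the level of formal/smooth data, but matching the precise algebraic isomorphism class of the cone requires careful tracking of the Floer-theoretic effect of the surgery on $\mathrm{hom}$-complexes, and this is where the hypothesis $n \geq 5$ does most of its work.
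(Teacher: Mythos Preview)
Your high-level strategy is correct and matches the paper's: realize $\mathrm{cone}(p\cdot\mathrm{Id}_{C_i})$ as an embedded Lagrangian disk near each index-$n$ cocore $C_i$, carve these disks out, and invoke the Ganatra--Pardon--Shende quotient description of the Viterbo functor. However, the two concrete steps you propose for building the disks are both problematic.

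First, ``Lagrangian surgery among $p$ parallel copies of $L$'' does not produce $\mathrm{cone}(p\cdot\mathrm{Id}_L)$. Surgery at a transverse intersection point realizes the cone on the corresponding Floer generator, so iterated surgery on nearby parallel copies gives an iterated cone on units---quasi-isomorphic to a single copy of $L$, not to $\mathrm{cone}(p\cdot\mathrm{Id}_L)$. There is no evident way to arrange a single transverse intersection whose Floer class is $p$ times the unit. The paper's construction is entirely different: working in a neighborhood $T^*D^n$ of a cocore, one takes the Abouzaid--Seidel disk $D_U=\Gamma(df)\cap T^*D^n$ associated to a codimension-zero $p$-Moore space $U\subset S^{n-1}$, and a direct Morse computation gives $D_U\cong\tilde C^{*-1}(U)\otimes T^*_0 D^n\cong\mathrm{cone}(p\cdot\mathrm{Id}_{T^*_0 D^n})$ in $\Tw\mathcal{W}(T^*D^n,\partial D^n)$. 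The hypothesis $n\ge 5$ enters exactly here, to embed the $2$-dimensional Moore space into $S^{n-1}$; it is not an h-principle or surgery constraint.

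Second, ``invoke Murphy's h-principle to make the Legendrian boundary loose and hence freely isotopable'' is fatal: loosening $\partial D_{p,L}$ changes the disk's isomorphism class in $\mathcal{W}(X)$---a disk with loose boundary is a zero object---so the resulting quotient would no longer be localization at $p$. The paper does use flexibility, but in a different place: after carving $D_p$ out of $(T^*D^n,\partial D^n)$ one attaches an \emph{additional} flexible $n$-handle (along a loosening of the carving attaching sphere, not of the stop $\partial D^n$) so that the result is again a ball $(B^{2n}_{std},\Lambda_p)$. Replacing each standard handle $(B^{2n}_{std},\Lambda_\varnothing)$ of $X$ by this ``$P$-handle'' yields $X_P$; the extra flexible handle is Floer-invisible and is precisely what makes the cobordism $X\setminus X_P$ smoothly trivial. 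Your cancelling-handle idea for smooth triviality is in the right spirit, but it must be implemented this way rather than by loosening the disks you are quotienting by.
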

	
\begin{remark}
	For us, the objects of $\mathcal W(X; R)$ are graded exact spin Lagrangian submanifolds (branes) in $X$ that are closed or have conical Legendrian boundary in a collar of $\partial X$. We will usually not specify what type of grading data our Lagrangian should have, except when $X$ is a cotangent bundle and we will use the canonical $\mathbb{Z}$-grading.
	
	In Section \ref{sec:C*_mod} we will briefly allow some branes to be equipped with rank $1$ local systems. We will generally not treat these as honest members of $\mathcal W(X; R)$, but they are certainly (isomorphic to) members of $\Tw\mathcal W(X; R)$.
\end{remark}

More precisely, there is a Weinstein homotopy of the Weinstein structure on $X$ to a different structure $X'$ so that $X_P$ is a sublevel set of the Weinstein Morse function on $X'$. 
That is, $X_P$ is itself a Weinstein domain \textit{and} $X' \backslash X_P$ is a Weinstein cobordism. 
We also note that Theorem \ref{thm: exotic_subdomains} holds for any grading of $X$ (and the induced grading on its subdomains). 

Our construction is related to a result of Abouzaid and Seidel \cite{abouzaid_seidel_recombination}, who also showed that any Weinstein domain $X^{2n}, n \ge 6,$ can be modified to a produce a new Weinstein domain $X_P'$, almost symplectomorphic to $X$, with the property that $SH^*(X_P; \mathbb{F}_q) \cong SH^*(X; \mathbb{F}_q)$ if $q \not \in P$ and $SH(X_P; \mathbb{F}_q) = 0$ otherwise. Theorem \ref{thm: exotic_subdomains} proves this property on the level of Fukaya categories, which implies the result on the level of symplectic cohomology \cite{Ganatra_thesis}. 
The other main difference between our domain $X_P$ and the domain $X_P'$ produced by Abouzaid and Seidel \cite{abouzaid_seidel_recombination} is that $X_P$ is manifestly a subdomain of $X$ while $X_P'$ is an abstract Weinstein domain. The construction of Abouzaid and Seidel involves modifying a Lefschetz fibration for $X$ by enlarging the fiber and adding new vanishing cycles, and there is no obvious map between $X$ and $X_P'$. Our construction involves removing a certain regular Lagrangian disk (which also appears in Abouzaid-Seidel's work) so that $X_P$ is automatically a subdomain of $X$; constructing these regular disks requires $n \ge 5$, hence the restriction on $n$ in Theorem \ref{thm: exotic_subdomains}. 
Both our construction and that of Abouzaid-Seidel require many choices, but we conjecture that one can make these choices so that the resulting Weinstein domains $X_P, X_P'$ agree.

\begin{remark}
	We expect that Theorem \ref{thm: exotic_subdomains} also holds for an infinite collection of primes $P$ if we allow $X_P$ to be a symplectic manifold that is the intersection of infinitely many Weinstein domains. Namely, if $P = \{p_1, p_2, \dotsc\}$ and $P_i :=\{p_1, \dotsc, p_i\}$, then Theorem \ref{thm: exotic_subdomains} provides a decreasing collection of Weinstein subdomains $X \supset X_{P_1} \supset X_{P_2} \supset X_{P_3} \supset \dotsb $ and we can set $X_P := \bigcap_{i\ge 1} X_{P_i}$. Then we expect that $\Tw\mathcal{W}(X_P) \cong \Tw\mathcal{W}(X)[\frac{1}{P}]$ (or can take this as a definition).
	However, we do not know whether $X_P$ is a Weinstein \emph{manifold} in the sense of \cite{CE12}, i.e.  an increasing \emph{union} of finite type Weinstein domains. If $P$ is the set of all primes, we consider $X_P$ to be a symplectic `rationalization' of $X$, analogous to the rationalization of classical spaces.
\end{remark}

\begin{remark}\label{rem: stops}
	An analog of Theorem \ref{thm: exotic_subdomains} is true for Weinstein domains with Weinstein stops. 
	For example, 
	in Theorem \ref{thm: p-handles} we prove that there is a Legendrian sphere $\Lambda_P \subset \partial B^{2n}_{std}$ so that 
	$\Tw\mathcal{W}(B^{2n}_{std}, \Lambda_P) \cong \Tw\mathcal{W}(B^{2n}_{std}, \Lambda_\varnothing)[\frac{1}{P}] \cong \Tw  \mathbb{Z}[\frac{1}{P}]$, where $\Lambda_\varnothing$ is the Legendrian unknot, 
	and there is a smoothly trivial Lagrangian cobordism $L \subset \partial B^{2n}_{std} \times [0,1]$
	whose positive, negative ends $\partial_{\pm} L$ coincide with
	$\Lambda_\varnothing,\Lambda_P$ respectively.  Note that $(B^{2n}_{std}, \Lambda_\varnothing)$ is the standard Weinstein handle of index $n$; 
	we call $(B^{2n}_{std}, \Lambda_P)$ a Weinstein $P$-handle of index $n$.
	The construction of the Weinstein subdomain  $X_P$ in Theorem \ref{thm: exotic_subdomains} can be viewed as replacing all standard Weinstein handles of index $n$ with Weinstein $P$-handles. This is similar to the classical rationalization of a CW complex, in which all standard cells are replaced with `rational' cells.
\end{remark}

Next we consider Weinstein subdomains of the cotangent bundle $T^*M_{std}$ of a smooth manifold $M$. 
Using Theorem \ref{thm: exotic_subdomains} and the additional  fact that 
$\Tw\mathcal{W}(T^*M_{std}; \mathbb{F}_p)$ 
is non-trivial for any $p$, we show that $T^*M_{std}$ has many infinitely different Weinstein subdomains.

\begin{corollary}\label{cor: exotic_subdomains_cotangent}
	If $n \ge 5$,  then for any finite collection $P$ of primes numbers,  possibly empty or containing zero,  there is a Weinstein subdomain $T^*M_{P}^n \subset T^*M_{std}^n$ almost symplectomorphic to $T^*M_{std}$ so that $\Tw\mathcal{W}(T^*M_P; \mathbb{Z}) \cong  \Tw\mathcal{W}(T^*M; \mathbb{Z})[\frac{1}{P}]$. Furthermore, we can arrange for $T^*M_P$ to be a Weinstein subdomain of $T^*M_{Q}$ if and only if $Q \subset P$ or $0 \in P$, i.e. the product of primes in $P$ divides the product of those in $Q$.
\end{corollary}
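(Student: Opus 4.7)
The strategy is to apply Theorem~\ref{thm: exotic_subdomains} to $X = T^*M_{std}$, which immediately produces an almost-symplectomorphic Weinstein subdomain $T^*M_P \subset T^*M_{std}$ with $\Tw\mathcal{W}(T^*M_P; \mathbb{Z}) \cong \Tw\mathcal{W}(T^*M; \mathbb{Z})[\tfrac{1}{P}]$. The existence half of the lattice statement, namely the Weinstein embeddings $T^*M_P \hookrightarrow T^*M_Q$ whenever $Q \subset P$ or $0 \in P$, is exactly part (2) of that theorem.

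The substantive content is the obstruction: if $0 \notin P$ and $Q \not\subset P$, then $T^*M_P$ admits no Weinstein subdomain embedding into $T^*M_Q$. For this I would use Viterbo functoriality for symplectic cohomology: any Weinstein subdomain embedding $T^*M_P \hookrightarrow T^*M_Q$ induces a unital ring map
\[
\mathcal{V}^*\colon SH^*(T^*M_Q; \mathbb{F}_p) \longrightarrow SH^*(T^*M_P; \mathbb{F}_p)
\]
for every prime $p$. By the Fukaya-categorical content of Theorem~\ref{thm: exotic_subdomains} (which descends to symplectic cohomology, as noted in the excerpt via Ganatra), one has $SH^*(T^*M_R; \mathbb{F}_p) = 0$ when $p \in R$ or $0 \in R$, and $SH^*(T^*M_R; \mathbb{F}_p) \cong SH^*(T^*M; \mathbb{F}_p)$ otherwise. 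Since $SH^*(T^*M; \mathbb{F}_p) \neq 0$ for every prime $p$ (via Viterbo's isomorphism with $H_{n-*}(\mathcal{L}M; \mathbb{F}_p)$, or equivalently because the zero section is a nonzero object of $\mathcal{W}(T^*M; \mathbb{F}_p)$, as emphasised just before the corollary), a unital ring map from a vanishing source to a nonvanishing target is impossible. Two sub-cases then complete the obstruction: a prime $q \in Q \setminus P$ gives the contradiction at $p = q$, while $0 \in Q \setminus P$ gives the contradiction at any prime $p \notin P$.

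The main obstacle I anticipate is essentially bookkeeping: confirming that the isomorphism of wrapped Fukaya categories in Theorem~\ref{thm: exotic_subdomains} genuinely delivers the claimed computation of $SH^*(T^*M_R; \mathbb{F}_p)$ as a localization, in a way compatible with the unital ring structure on $SH^*$ and with the Viterbo transfer associated to \emph{any} Weinstein subdomain embedding, not merely those produced inside the proof of Theorem~\ref{thm: exotic_subdomains}. Both points are standard once Ganatra's identification of $SH^*$ with the Hochschild cohomology of $\Tw\mathcal{W}$ is invoked, so the non-formal input beyond Theorem~\ref{thm: exotic_subdomains} is minimal, and the rest of the argument reduces to the elementary combinatorics of divisibility outlined above.
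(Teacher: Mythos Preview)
Your argument is correct and matches the paper's. The paper phrases the obstruction via the Viterbo localization functor on $\Tw\mathcal{W}(-;\mathbb{F}_q)$ rather than the unital ring map on $SH^*$, but in the remark immediately following its proof it notes that your $SH^*$ version works equally well (and in fact rules out Liouville, not just Weinstein, embeddings); your explicit handling of the $0\in Q\setminus P$ sub-case is also slightly more careful than the paper's main text.
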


The claim here is stronger than in Theorem \ref{thm: exotic_subdomains}: here  $T^*M_P$ is a Weinstein subdomain of $T^*M_{Q}$ if and \emph{only if} $Q  \subset P$, or $0 \in P$
(in fact, `Weinstein' subdomain can be replaced with `Liouville' subdomain). The proof of Corollary \ref{cor: exotic_subdomains_cotangent} carries over to any Weinstein domain $X$ for which $\mathcal{W}(X; \mathbb{F}_p)$ is non-trivial for all $p$, e.g. if $X$ has a closed exact Lagrangian. 
Furthermore, by the `only if' part of the claim, our subdomains form a decreasing lattice whose order cannot be reversed. For example, there is an infinite decreasing sequence
\[
    T^*M_{std} \supsetneq T^*M_{2} \supsetneq T^*M_{2,3} \supsetneq T^*M_{2,3,5} \supsetneq \dotsb \supsetneq T^*M_{P_k} \supsetneq \dotsb
    \supsetneq T^*M_{0} = T^*M_{flex}
\]
where $P_k$ is the set of the first $k$ primes; the other subdomains $T^*M_P$ where $P \ne P_k$, e.g. $T^*M_{7,13}$, 
contain $T^*M_{P_k}$ for sufficiently large $k$. In particular, $T^*M_{std}$ has many \emph{singular} Lagrangians given by the skeleta of $T^*M_P$. These skeleta are not Hamiltonian isotopic since otherwise we could find a Liouville embedding of $T^*M_Q$ into $T^*M_P$ for $P \supset Q$. 
We contrast this with the nearby Lagrangian conjecture which claims that all closed exact \textit{smooth} Lagrangians of $T^*M_{std}$  are Hamiltonian isotopic. Finally, we  note that $T^*M_P$ has no closed exact smooth Lagrangians if $P$ is non-empty since its Fukaya category over $\mathbb F_p$ vanishes.
	
Our second main result about subdomains of $T^*M_{std}$ is a converse to Corollary \ref{cor: exotic_subdomains_cotangent}: the Fukaya category of \textit{any} Weinstein subdomain of $T^*M_{std}$ is a localization of $\Tw\mathcal{W}(T^*M_{std}; \mathbb{Z})$ away from some finite collection of primes. Here we use the $\mathbb{Z}$-grading on  $T^*M_{std}$ and its subdomains induced by the Lagrangian fibration by cotangent fibers.
\begin{theorem}\label{thm:Wein_subdom_classification}
	If  $M^n$ is a closed, simply connected, spin manifold and $i\colon X \hookrightarrow T^*M_{std}$ is a Weinstein subdomain, then 
	$\Tw\mathcal{W}(X; \mathbb{Z}) \cong  \Tw\mathcal{W}
	(T^*M_{std}; \mathbb{Z}) [\frac{1}{P}]$ 	
	for some finite collection of primes $P$, that is possibly empty or contains $0$, and is unique (unless $P$ contains $0$). Under this equivalence, the Viterbo transfer functor
		$\Tw\mathcal{W}(T^*M; \mathbb{Z}) \rightarrow \Tw\mathcal{W}(X; \mathbb{Z})$ is localization away from $P$. 
	Furthermore, either the restriction $i^*\colon H^n(T^*M^n; \mathbb{Z}) \rightarrow H^n(X; \mathbb{Z})$ is an isomorphism or $\mathcal{W}(X; \mathbb{Z}) \cong 0$ (or both).
\end{theorem}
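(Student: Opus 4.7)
The plan is to realize $\Tw\mathcal{W}(X;\mathbb{Z})$ as a quotient of $\Tw\mathcal{W}(T^*M_{std};\mathbb{Z})$ by a subcategory generated by Lagrangians in the complementary Weinstein cobordism, and then to identify this subcategory as the cones on prime multiplications using the rigidity of cotangent bundles of simply connected spin manifolds.

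First, for the Weinstein subdomain $i\colon X \hookrightarrow T^*M_{std}$ (whose complement is a Weinstein cobordism by our standing assumption), I would invoke that the Viterbo transfer
$\mathcal{V}\colon \Tw\mathcal{W}(T^*M;\mathbb{Z}) \to \Tw\mathcal{W}(X;\mathbb{Z})$
is a quotient functor whose kernel $\mathcal{A}$ is split-generated by the core Lagrangians of $T^*M \setminus X$; this is available for sectorial inclusions by work of Ganatra--Pardon--Shende. Next, because $M$ is closed, simply connected, and spin, Abouzaid's generation theorem identifies $\Tw\mathcal{W}(T^*M;\mathbb{Z})$ with perfect modules over $C_{-*}(\Omega M;\mathbb{Z})$, with a cotangent fiber $F$ as generator. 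Under $\mathcal{V}$ either $\mathcal{V}(F) \neq 0$, in which case $\mathcal{V}(F)$ generates $\Tw\mathcal{W}(X;\mathbb{Z})$ with endomorphism algebra a quotient/localization of $C_{-*}(\Omega M;\mathbb{Z})$, or $\mathcal{V}(F) = 0$, in which case $\mathcal{W}(X;\mathbb{Z}) = 0$ and we assign $0 \in P$.

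The main obstacle is then to pin down the quotient as a localization at a finite set of primes. The essential input is the paper's classification of which twisted complexes in $\Tw\mathcal{W}(T^*M;\mathbb{Z})$ are represented by genuine Lagrangians (worked out in detail for $T^*S^n_{std}$ in Example \ref{ex: cotangent_sphere} and, for the theorem, needed for the class of $M$ in question). Every generator of $\mathcal{A}$ is an actual closed exact Lagrangian sitting in $T^*M$, so this classification should force its image in $\Tw\mathcal{W}(T^*M;\mathbb{Z})$ to split-generate the same thick subcategory as a finite collection of cones $\mathrm{cone}(k_i \cdot \mathrm{Id}_F)$ for integers $k_i \in \mathbb{Z}$. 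Factoring each $k_i$ into primes yields a finite set $P$ with $\mathcal{A} \simeq \langle \mathrm{cone}(p \cdot \mathrm{Id}) \,\vert\, p \in P \rangle$, and hence $\Tw\mathcal{W}(X;\mathbb{Z}) \cong \Tw\mathcal{W}(T^*M;\mathbb{Z})[\tfrac{1}{P}]$ with $\mathcal{V}$ becoming the localization functor. Uniqueness of $P$ (outside the $0 \in P$ degenerate case) is then read off from which primes act invertibly on $K_0$ of the quotient.

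Finally, for the cohomological statement, if $\mathcal{V}(F) \neq 0$ then the zero section $M \subset T^*M_{std}$, representing the generator of $H^n(T^*M;\mathbb{Z}) = \mathbb{Z}$, still pairs nontrivially with $F$ inside $X$ via the open--closed/pairing map, so $i^*$ has nonzero image in $H^n(X;\mathbb{Z})$ and is therefore an isomorphism on top cohomology; otherwise $\mathcal{W}(X;\mathbb{Z}) = 0$ as already noted. I expect the bulk of the technical work to lie in the classification step invoked in the second paragraph --- in particular verifying that simple-connectedness and the spin hypothesis are enough to rule out localizations at non-principal multiplicative subsets of $C_{-*}(\Omega M;\mathbb{Z})$.
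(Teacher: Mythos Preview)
Your overall strategy matches the paper's: present the Viterbo transfer as the quotient of $\Tw\mathcal{W}(T^*M;\mathbb{Z})$ by the cocores $D_1,\dotsc,D_k$ of the index $n$ handles of the complementary cobordism (subcritical handles contribute nothing), then use the paper's classification (Corollary~\ref{cor: disks_cotangent_bundle_image}) to identify each $D_i$ with $CW^*(M,D_i)\otimes T_q^*M$, split the finite free complex $CW^*(M,D_i)$ into two-term pieces $\mathbb{Z}[1]\xrightarrow{m}\mathbb{Z}$, and read off $P$ from the prime factors of the $m$'s. Two points to correct. First, the generators of $\mathcal{A}$ are not ``closed exact Lagrangians'' but Lagrangian \emph{disks} with Legendrian boundary; this is not cosmetic, since Corollary~\ref{cor: disks_cotangent_bundle_image} (via Theorem~\ref{thm: Lagrangian_homology_twisted}) applies precisely because disks are contractible and hence null-homotopic in $T^*M$. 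A closed exact Lagrangian would instead be isomorphic to the zero section, which is \emph{not} in the image of $\otimes\,T_q^*M$ (cf.\ Example~\ref{ex: cotangent_sphere}), so your argument would break. Second, for uniqueness the paper tests with $\mathbb{F}_p$-coefficients: $\Tw\mathcal{W}(X;\mathbb{F}_p)$ vanishes iff $p\in P$. Your $K_0$ suggestion is less direct and would need justification.

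Your argument for the cohomological clause does not work as written. The zero section $M$ need not lie in $X$, so ``$M$ pairs with $F$ inside $X$'' has no evident meaning, and there is no general implication from $\mathcal{V}(F)\ne 0$ in the Fukaya category to $[F]\ne 0$ in $H^n(X)$. The paper instead argues contrapositively through the cocores: if $i^*\colon H^n(T^*M)\to H^n(X)$ fails to be an isomorphism, then some $[D_i]\in H^n(T^*M;\mathbb{Z})\cong\mathbb{Z}$ is nonzero, so the algebraic intersection $M\cdot D_i=\chi\bigl(CW^*(M,D_i)\bigr)$ is nonzero. Hence the free complex $CW^*(M,D_i)$ has a rank-one free summand $\mathbb{Z}$, so $D_i$ split-generates $T_q^*M$ and the quotient $\Tw\mathcal{W}(X;\mathbb{Z})$ is trivial. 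This is the step you should replace your final paragraph with.
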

	
For $n \ge 5$, Theorem \ref{thm:Wein_subdom_classification} combined with Corollary \ref{cor: exotic_subdomains_cotangent} completely classify which categories appear as Fukaya categories (with integer coefficients) of Weinstein subdomains of cotangent bundles of closed, simply connected, spin manifolds. For $n \le 4$, the question remains open whether the categories $\Tw\mathcal{W}(T^*M_{std}^n; \mathbb{Z})[\frac{1}{P}]$ actually appear as Fukaya categories of subdomains. Indeed, in the $n = 1$ case, the only subdomains of $T^*S^1_{std}$ are $T^*S^1_{std}$ or $B^{2}_{std}$, which algebraically correspond to the cases $P = \varnothing$ and $P = 0$. We note that the condition on the map $i^*$ shows that any Weinstein ball $\Sigma \subset T^*M_{std}$ has trivial $\mathcal{W}(\Sigma)$. There are no restrictions on $i^*$ in degrees \textit{less than} $n$, as in the case of $T^*M_{std} \cup H^{n-1} \subset T^*M_{std}$. Finally, we note that the `both' case does occur in the case of $T^*M^n_{flex} \subset T^*M^n_{std}$.

We emphasize that Theorem \ref{thm:Wein_subdom_classification} classifies \textit{Weinstein} subdomains of $X \subset T^*M_{std}$; namely, $X$ is itself a Weinstein domain \textit{and} $T^*M_{std}\backslash X$ is a Weinstein cobordism (after Weinstein homotopy of $T^*M_{std}$). We do not know if our result holds for more general Liouville subdomains $X \subset T^*M_{std}$, for which either $X$ is not a Weinstein domain or  $T^*M_{std}\backslash X$ is not a Weinstein cobordism. However, in the only known examples of subdomains $X \subset T^*M_{std}$ for which  $T^*M_{std}\backslash X$ is not a Weinstein cobordism, $X$ is a flexible domain \cite{EM} and hence has trivial Fukaya category. Furthermore, our classification is quite special to cotangent bundles: for a general Weinstein domain $X$, there are subdomains $X_0$ for which $\Tw\mathcal{W}(X_0)$ is different from $\Tw\mathcal{W}(X)[\frac{1}{P}]$ for any collection of primes $P$. For example,  the boundary connected sum $T^*M_{std} \natural T^*N_{std}$ of two cotangent bundles $T^*M, T^*N$ has a natural collection subdomains indexed by \textit{pairs} of collections of primes $P, Q$, namely $T^*M_P\natural T^*N_Q$.

\subsection{Outline of proofs}\label{subsection: outline}
	
We now outline the proofs of our two main results: Theorem \ref{thm: exotic_subdomains} and Theorem \ref{thm:Wein_subdom_classification}. We focus primarily on the latter result, whose proof involves describing which twisted complexes in $\Tw \mathcal{W}(T^*M_{std})$ are quasi-isomorphic to actual Lagrangians, i.e. the image of the functor $\mathcal{W}(T^*M_{std}) \hookrightarrow \Tw \mathcal{W}(T^*M_{std})$.

To see the connection, consider a Weinstein subdomain $X_0^{2n} \subset X^{2n}$. The Weinstein cobordism $X\setminus X_0$ has index $n$ Lagrangian co-core disks $D_1, \dotsc, D_k$ which are objects of $\mathcal{W}(X)$. Ganatra, Pardon, and Shende \cite{ganatra_generation} proved that
\[
\Tw \mathcal{W}(X_0) \cong \Tw \mathcal{W}(X)/(D_1, \dotsc, D_k)
\]
and the localization functor 
\[
\Tw \mathcal{W}(X) \rightarrow \Tw\mathcal{W}(X_0)
\]
has a geometric interpretation and is called the \textit{Viterbo transfer functor}. See \cite{Sylvan_Orlov_functor} for results when $X, X_0$ are both Weinstein but $X \backslash X_0$ is not necessarily a Weinstein cobordism. So to describe $\Tw \mathcal{W}(X_0)$,  it suffices to describe the quasi-isomorphism classes of the Lagrangian disks $D_1, \dotsc, D_k$ in $\Tw \mathcal{W}(X)$. To prove Theorem \ref{thm: exotic_subdomains}, we  construct a disjoint collection of  disks $D_1, \dotsc, D_k \subset X^{2n}, n \ge 5,$ so that $\Tw \mathcal{W}(X; \mathbb{Z})/(D_1, \dotsc, D_k) \cong  \Tw \mathcal{W}(X; \mathbb{Z})[\frac{1}{P}]$. By removing the Weinstein handles associated to these disks, we get the subdomain $X_P$ with the desired property $\Tw \mathcal{W}(X_P; \mathbb{Z}) \cong  \Tw \mathcal{W}(X; \mathbb{Z})/(D_1, \dotsc, D_k)\cong \Tw \mathcal{W}(X; \mathbb{Z})[\frac{1}{P}]$.
	
\begin{remark}\label{rem: split_closure}
	In fact, the localization $\mathcal{C}/\mathcal{A}$ by some objects $\mathcal{A} \subset \mathcal{C}$ depends only on the \textit{split-closure} of $\mathcal{A}$ in $\mathcal{C}$ 	\cite{ganatra_generation}, which is the kernel of the localization $\mathcal{C}\rightarrow \mathcal{C}/\mathcal{A}$. A subcategory $\mathcal{C}' \subset \mathcal{C}$ is split-closed if 
	for any two objects $A, B$ of
	$\mathcal{C}$ for which $A\oplus B$ is an object of $\mathcal{C}'$, then  $A, B$ are also objects of $\mathcal{C}'$. 
	More generally, there is a correspondence between localizing functors $\mathcal{C} \rightarrow \mathcal{D}$ and split-closed subcategories of $\mathcal{C}$.
\end{remark}

Since any Weinstein domain $X \subset T^*M_{std}$ has  $\Tw \mathcal{W}(X) \cong \Tw \mathcal{W}(T^*M_{std})/(D_1, \dotsc, D_k)$ for some collection of Lagrangian disks $D_1, \dotsc, D_k$ in $T^*M_{std}$, to prove Theorem \ref{thm:Wein_subdom_classification} we need to classify the objects of $\Tw \mathcal{W}(T^*M_{std})$ that are quasi-isomorphic to embedded Lagrangian disks. 
By work of Abouzaid \cite{abouzaid_cotangent_fiber}, any object of $\Tw \mathcal{W}(T^*M_{std})$ is quasi-isomorphic to a twisted complex of the cotangent fibers $T^*_q M$; after taking boundary connected sums of these cotangent fibers along isotropic arcs, we can replace this twisted complex with a single embedded Lagrangian disk  \emph{equipped with a bounding cochain}. However for Theorem \ref{thm:Wein_subdom_classification}, we need to consider Lagrangian disks without bounding cochains and as we will see in Theorem \ref{thm: Lagrangian_homology_twisted} below, not every twisted complex in 
$\Tw \mathcal{W}(T^*M_{std})$ is quasi-isomorphic to such a disk.

In the following key result, we characterize those twisted complexes in $\Tw  \mathcal{W}(T^*M_{std})$ that are quasi-isomorphic to Lagrangian disks. To make this precise, we fix some notation. Let $A$ be an object of some pre-triangulated $A_\infty$-category $\mathcal{C}$ over $\mathbb Z$. A homotopy unit $e\in \mathrm{end}_\mathcal{C}(A)$ of $A$ gives an $A_\infty$-homomorphism $\mathbb Z\rightarrow \mathrm{end}_\mathcal{C}(A)$, which induces a functor $\Tw \mathbb Z \rightarrow \Tw \mathrm{end}_\mathcal{C}(A)$. 
Applying this to $\mathcal{C} = \Tw \mathcal{W}(T^*M_{std}; \mathbb{Z})$, and $A = T^*_q M$, we get the composition of functors 
\begin{equation}\label{eqn: tensor_functor}
	\otimes T^*_q M\colon\Tw \mathbb{Z} \rightarrow 
	\Tw \mathrm{end}(T_q^*M) \xrightarrow{\sim}
	\Tw \mathcal{W}(T^*M_{std}; \mathbb{Z})  
\end{equation}
Note here that $\Tw \mathbb{Z}$ is the category of finite cochain complexes, i.e. those $\mathbb Z$-cochain complexes whose underlying graded Abelian group is free and finitely generated. The functor $\otimes T^*_q M$ sends such a twisted complex on $\mathbb{Z}$ to the corresponding twisted complex on $T^*_q M$. In particular, the differential consists entirely of morphisms that are all integer multiples of the unit. By Abouzaid's theorems \cite{Abouzaid_based_loops, abouzaid_cotangent_fiber}, the second functor is actually a quasi-equivalence, meaning that every object of $\Tw \mathcal{W}(T^*M_{std}; \mathbb{Z})$ is a twisted complex of $T^*_q M$ with differential given by \textit{arbitrary} elements of $\mathrm{end}(T^*_q M)$. As we will see, the composite functor $\otimes T_q^*M$ is not essentially surjective, but for nice $M$ every Lagrangian disk is contained in its essential image. More generally, we have the following result.
\begin{theorem}\label{thm: Lagrangian_homology_twisted}
	Let $M^n$ be a closed, simply-connected, spin manifold and let  $i: L^n \hookrightarrow T^*M_{std}^n$ be an exact Lagrangian brane. If $i: L^n \hookrightarrow T^*M_{std}^n$ is null-homotopic as a continuous map, then $L$
	is in the image of $\otimes T^*_q M$. More precisely, 
	$L$ is quasi-isomorphic to 
	\[
	CW^*(M, L; \mathbb{Z}) \otimes T^*_q M
	\]
	in $\Tw \mathcal{W}(T^*M_{std}; \mathbb{Z})$, where the cochain complex $CW^*(M, L; \mathbb{Z})$ is considered an object of $\Tw \mathbb{Z}$. 
\end{theorem}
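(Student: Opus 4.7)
My strategy combines Abouzaid's cotangent-fiber generation theorem with a Koszul-duality-style argument, using the null-homotopy of $i$ to trivialize a certain $A_\infty$-module structure. Set $A := CW^*(T^*_qM;\mathbb{Z}) \simeq C_{-*}(\Omega_qM;\mathbb{Z})$, the $A_\infty$-algebra identified by Abouzaid \cite{Abouzaid_based_loops}. By Abouzaid's generation theorem \cite{abouzaid_cotangent_fiber}, the Yoneda functor $F := CW^*(T^*_qM, -)\colon \Tw\mathcal{W}(T^*M_{std};\mathbb{Z}) \xrightarrow{\sim} \mathrm{Perf}(A)$ is a quasi-equivalence with inverse $(-) \otimes^L_A T^*_qM$, so $L \simeq F(L) \otimes^L_A T^*_qM$. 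Since $M$ is simply connected, $F(M) \simeq \mathbb{Z}$ with $A$ acting through the augmentation $A \to H_0(\Omega_qM) = \mathbb{Z}$.

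The heart of the proof is the \emph{freeness claim}: when $i$ is null-homotopic, $F(L) \simeq V \otimes_\mathbb{Z} A$ as right $A$-modules for some finite complex $V \in \Tw\mathbb{Z}$. Granted this, $V$ is identified by
\[
V \simeq CW^*(M, V \otimes T^*_qM) \simeq CW^*(M, L),
\]
using $CW^*(M, T^*_qM) \simeq \mathbb{Z}$, and substituting back,
\[
L \simeq (CW^*(M, L) \otimes_\mathbb{Z} A) \otimes^L_A T^*_qM \simeq CW^*(M, L) \otimes_\mathbb{Z} T^*_qM,
\]
which is exactly the image of $CW^*(M, L) \in \Tw\mathbb{Z}$ under $\otimes T^*_qM$.

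To establish freeness, I would dualize using the fact that $T^*_qM$ and $M$ form a Koszul duality pair in $\mathcal{W}(T^*M_{std};\mathbb{Z})$: right $A$-modules correspond to $C^*(M;\mathbb{Z})$-modules (with $CW^*(M, M) \simeq C^*(M)$ by Floer's theorem), and free $A$-modules match the trivial (augmentation) $C^*(M)$-modules. The Koszul dual of $F(L)$ is $CW^*(M, L)$ with $C^*(M)$-action by precomposition; this action is topologically given by the pullback $\pi^*\colon C^*(M) \to C^*(L)$ along the projection $\pi\colon L \to M$, which agrees with $i$ up to the deformation retract $T^*M \to M$. When $i$ is null-homotopic, $\pi^*$ is chain-homotopic to the augmentation $C^*(M) \to \mathbb{Z}\hookrightarrow C^*(L)$, trivializing the $C^*(M)$-action on $CW^*(M, L)$ and hence, by Koszul duality, yielding freeness of $F(L)$.

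The main obstacle is upgrading this trivialization from the classical $H^*(M)$-level action to a coherent $A_\infty$-level trivialization of all higher module structure maps. Concretely, I need a chain-level PSS-type comparison identifying the Floer-theoretic $C^*(M)$-action on $CW^*(M, L)$ with the topological pullback, compatible with all $A_\infty$-operations, together with control of integral Koszul duality for cotangent bundles. The simply-connected and spin hypotheses are essential here: they ensure both that the integral orientation and transversality machinery works, and that the $A_\infty$-Koszul duality between $A$ and $C^*(M)$ holds over $\mathbb{Z}$ without torsion obstructions.
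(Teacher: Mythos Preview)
Your proposal is correct and follows essentially the same route as the paper: both arguments reduce to showing that the $C^*(M)$-module structure on $CW^*(M,L)$ factors through the augmentation (your ``trivialization''), then invoke Koszul duality between $T^*_qM$ and $M$ to conclude. The paper's Proposition~3.3 (built from moduli of ``hedge maps'') is precisely the chain-level PSS-type comparison you flag as the main obstacle, and its Proposition~3.8 packages the Koszul-duality step as full faithfulness of the restricted Yoneda functor $\mathcal Y\colon \mathcal W^{\mathrm{Morse}}(T^*M)\to\mathrm{Mod}_{C^*(M)}$; the only organizational difference is that the paper works directly on the $C^*(M)$-side rather than first proving freeness of $F(L)$ in $\mathrm{Perf}(A)$ and then translating back.
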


Combining this result with the construction of the Lagrangian disks in the Theorem \ref{thm: exotic_subdomains}, we have the following description of the image of $\otimes T^*_q M$.

\begin{corollary}\label{cor: disks_cotangent_bundle_image}
	If $M^n$ is a closed, simply-connected, spin manifold and $L \subset T^*M_{std}$ is a Lagrangian disk, then $L$ is in the essential image of $\otimes T^*_q M$. If $n \ge 5$, then every object of $\Tw  \mathcal{W}(T^*M_{std})$ in the image of $\otimes T^*_q M$ is quasi-isomorphic to a Lagrangian disk.
\end{corollary}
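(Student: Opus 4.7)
The first claim follows immediately from Theorem \ref{thm: Lagrangian_homology_twisted}. A Lagrangian disk $L$ is contractible, so the inclusion $L \hookrightarrow T^*M_{std}$ is null-homotopic, and Theorem \ref{thm: Lagrangian_homology_twisted} gives $L \simeq CW^*(M, L; \mathbb{Z}) \otimes T^*_q M$, which by definition lies in the essential image of $\otimes T^*_q M$.

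For the second claim, any object in the essential image of $\otimes T^*_q M$ has the form $C \otimes T^*_q M$ for some $C \in \Tw\mathbb{Z}$. By Smith normal form, every such $C$ is quasi-isomorphic in $\Tw\mathbb{Z}$ to a direct sum of shifted copies of $\mathbb{Z}$ and of two-term cones $\mathrm{cone}(m \cdot \mathrm{Id}_\mathbb{Z})$ for various integers $m$. The boundary connected sum of two $n$-disks is an $n$-disk, and boundary connected sum of Lagrangians along isotropic arcs implements direct sum in $\Tw\mathcal{W}(T^*M_{std}; \mathbb{Z})$; moreover, shifted cotangent fibers $T^*_q M[k]$ are themselves Lagrangian disks after adjusting the grading. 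The problem thus reduces to realizing each $\mathrm{cone}(m \cdot \mathrm{Id}_{T^*_q M}[k])$ as an embedded Lagrangian disk.

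For this last step I would invoke the construction of the regular Lagrangian disks $D_1, \dotsc, D_k$ used in the proof of Theorem \ref{thm: exotic_subdomains}, applied to $X = T^*M_{std}$. With $P = \{p\}$, that construction produces an embedded Lagrangian disk in $T^*M_{std}$ realizing $\mathrm{cone}(p \cdot \mathrm{Id}_{T^*_q M})$, modeled on the Weinstein $\{p\}$-handle of Remark \ref{rem: stops}. Replacing $p$ by an arbitrary integer $m$ amounts to modifying the attaching Legendrian to encode multiplication by $m$ instead of by $p$; nothing in the $P$-handle construction or the Abouzaid--Seidel recombination used for its realization relies on primality, and the hypothesis $n \ge 5$ is exactly what ensures embeddedness and regularity of the resulting disk. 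Appropriate regrading supplies the shifts $[k]$, and iterated boundary connected sum then assembles an embedded Lagrangian disk quasi-isomorphic to any prescribed $C \otimes T^*_q M$.

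The principal obstacle is verifying that the cocore of this generalized $m$-handle has quasi-isomorphism class exactly $\mathrm{cone}(m \cdot \mathrm{Id}_{T^*_q M})$, rather than merely lying in the split-closure of such objects. This is a local Floer-theoretic computation in the handle neighborhood that parallels the computation for primes in Theorem \ref{thm: p-handles}.
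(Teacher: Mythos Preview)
Your proposal is correct and follows the same route as the paper. For the first claim, both arguments simply note that a disk is contractible and apply Theorem \ref{thm: Lagrangian_homology_twisted}. For the second claim, the paper's implicit argument is exactly your decomposition via Smith normal form, realization of each summand by an Abouzaid--Seidel disk $D_{U_m}$, and assembly by isotropic boundary connected sum; this is packaged in Proposition \ref{prop: disks_in_cotangent_disk}, which you are effectively rederiving. The ``principal obstacle'' you flag is not an obstacle: the identification $D_{U_m} \cong \bigl(\mathbb{Z}[d+1] \xrightarrow{m} \mathbb{Z}[d]\bigr) \otimes T^*_0 D^n$ is the Morse-theoretic computation at the start of Section \ref{subsection: constructing_Lag_disks}, and it uses nothing about primality of $m$. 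You could streamline by citing Proposition \ref{prop: disks_in_cotangent_disk} directly and observing that implanting $(T^*D^n, \partial D^n)$ as a neighborhood of a cotangent fiber sends $T^*_0 D^n$ to $T^*_q M$, so the local equivalence $\Tw\mathcal W(T^*D^n,\partial D^n)\cong\Tw\mathbb Z$ becomes precisely the functor $\otimes\, T^*_q M$.
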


Theorem \ref{thm: Lagrangian_homology_twisted} translates the purely topological condition that the Lagrangian is null-homotopic into the Floer-theoretic condition on its quasi-isomorphism class in the Fukaya category. The proof of Theorem \ref{thm: Lagrangian_homology_twisted} actually shows that this topological condition can be weakened to the algebraic condition that the restriction homomorphism $i^*\colon C^*(T^*M; \mathbb{Z})\rightarrow C^*(L; \mathbb{Z})$ on singular cochain algebras is homotopic as an $A_\infty$-homomorphism to a map that factors through $\mathbb{Z}$. In Proposition \ref{prop: C* modules homotopic}, we prove a generalization of Theorem \ref{thm: Lagrangian_homology_twisted}  for arbitrary Lagrangians $i: L \hookrightarrow T^*M_{std}$ that are not nessarily null-homotopic: we prove that the $CW^*(M, M)$-module $CW^*(M, L)$ is in the image of the composition
\[
\mathrm{Mod}_{C^*(L)} \xrightarrow{i^\vee} \mathrm{Mod}_{C^*(T^*M)} \cong \mathrm{Mod}_{CW^*(M,M)},
\]
where $i^\vee$ is the pullback functor on modules induced by the restriction homomorphism $i^*: C^*(T^*M) \rightarrow C^*(L)$.

The proof of Corollary \ref{cor: disks_cotangent_bundle_image} uses the Koszul duality between the wrapped Floer cochains of a cotangent fiber and those of the zero-section of a cotangent bundle, the fact that the zero-section $M$ is homotopy equivalent to the ambient manifold $T^*M_{std}$, and a certain commutativity property of the closed-open map that holds for arbitrary Liouville domains (see Proposition \ref{prop: C* modules homotopic} and Remark \ref{rem: Hochschild cohomology}). Consequently, Corollary \ref{cor: disks_cotangent_bundle_image} is quite special to cotangent bundles and analogous results do not hold for general Weinstein domains. Even if $X^{2n}$  has a single index $n$ handle with co-core $D^n$, then it is not true that any Lagrangian disk $L \subset X$ is isomorphic to $C^* \otimes D$ for some cochain complex $C^*$ over $\mathbb{Z}$ (but since $D$ is a generator of $\Tw \mathcal{W}(X)$, $L$ is isomorphic to a twisted complex of $D$ whose differential has arbitrary morphisms). For example, this is the case if $X^{2n}$ is one of the exotic cotangent bundles constructed in \cite{Lazarev_maximal} that have many closed regular Lagrangians with different topology.

In the following example, we illustrate the above results when $M = S^n$. We describe the image of the functor $\mathcal{W}(T^*S^n_{std}) \hookrightarrow \Tw \mathcal{W}(T^*S^n_{std})$ and give examples of Lagrangians that are not in image of the functor $\otimes T^*_q S^n: \Tw \mathbb{Z} \rightarrow \Tw \mathcal{W}(T^*S^n_{std})$.

\begin{examples}\label{ex: cotangent_sphere}
    We first Floer-theoretically classify all exact Lagrangian branes in $T^*S^n_{std}$. If $L \subset T^*S^n$ is closed, then it is quasi-isomorphic to the zero-section $S^n \subset T^*S^n$ by \cite{FukSS}; if $L^n \subset T^*S^n$ has non-empty boundary, then any embedding $i: L^n \hookrightarrow T^*S^n$ is automatically null-homotopic and hence in the image of $\otimes T^*_q M: \Tw \mathbb{Z} \rightarrow \Tw \mathcal{W}(T^*S^n_{std}; \mathbb{Z})$; this implies that $L$ is quasi-isomorphic to a disk if $n \ge 5$.	However, there are many exact Lagrangians $L \subset T^*S^n_{std}$ that are not \textit{homotopy-equivalent} to a disk or $n$-sphere:  any smooth $n$-manifold $L$ with non-empty boundary and trivial complexified tangent bundle has an exact Lagrangian embedding into $T^*S^n$ for $n \ge 3$; see \cite{EGL, Lazarev_reg_lag}. Using the above classification, one can check that for any Lagrangian $L \subset T^*S^n_{std}$ with non-empty boundary, the wrapped Floer cohomology $HW^*(L,L)$ is either trivial or infinite-dimensional (over some field $\mathbb{F}_p$). This implies the following new case of the Arnold chord conjecture: any Legendrian $\Lambda \subset ST^*S^n_{std}$ that bounds an exact Lagrangian brane (so graded, spin) in $T^*S^n_{std}$ has at least one Reeb chord for any contact for any contact form;  see \cite{Hutchings_Taubes_chord_conjecture, Ritter, Mohnke_chord} for existing results.
    
    Although all Lagrangians with non-empty boundary are in the image of the functor $\otimes T^*_q S^n$, we now show the zero-section $S^n \subset T^*S^n$ is not; this  is compatible with the fact that $i: S^n \hookrightarrow T^*S^n$ is not null-homotopic. Indeed, any Lagrangian $L$ that is in the image of \[\otimes T^*_q M \colon \Tw\mathbb{Z} \rightarrow \Tw\mathcal{W}(T^*S^n_{std})\]
    represents something in the image of the pullback functor $i^\vee : Mod_\mathbb{Z} \rightarrow Mod_{C^*(S)}$ 
    and so has the property that the product \[CW^*(S^n, L) \otimes CW^n(S^n, S^n) \rightarrow CW^{*+n}(S^n, L)\] must vanish on cohomology (since $CW^*(S^n, S^n) \cong C^*(S^n) \rightarrow \mathbb{Z}$ vanishes in degree $n$). 
    Since this product does not vanish for  $L = S^n$, this Lagrangian is not in the image of $\otimes T^*_q S^n$. However since $T^*_q S^n$ generates $\Tw  \mathcal{W}(T^*S^n_{std})$, the zero-section  $S^n$ is still some twisted complex of $T^*_q S^n$. It turns out that $S^n$ is quasi-isomorphic to $T^*_q S^n[n] \overset{\gamma}{\rightarrow} T^*_q S^n$, where $\gamma$ is the generator of
    $CW^1(T^*_q S^n[n], T^*_q S^n) = CW^{1-n}(T^*_q S^{n}, T^*_q S^{n}) \cong C_{n-1}(\Omega S^n) \cong \mathbb{Z}$.
    Note that $\gamma$  is not  a multiple of the unit.
    
    In all, we have shown that if $n \ge 5$, the image of the full and faithful embedding $\mathcal{W}(T^*S^n_{std}) \hookrightarrow \Tw \mathcal{W}(T^*S^n_{std}) \cong \Tw \left\{T_q^*S^n\right\}$ is quasi-isomorphic to the subcategory
    \[
    \left\{C^* \otimes T^*_q S^n| C^* \mbox{ is a cochain complex over } \mathbb{Z} \right\} \cup \left\{T^*_q S^n[n] \overset{\gamma}{\rightarrow} T^*_q S^n\right\}.
    \]
    For more general manifolds $M$, $\mathcal{W}(T^*M)$ has other objects besides the zero-section and Lagrangian disks, e.g. the surgery of the zero-section and a cotangent fiber. 
\end{examples}
	
Finally, we use Corollary \ref{cor: disks_cotangent_bundle_image} to prove Theorem \ref{thm:Wein_subdom_classification} classifying the wrapped Fukaya categories of subdomains of $T^*M_{std}$.

\begin{proof}[Proof of Theorem \ref{thm:Wein_subdom_classification}]
    Let $X^{2n} \subset T^*M_{std}$ be a Weinstein subdomain and  $C^{2n} := T^*M_{std}\backslash X^{2n}$ the complementary Weinstein cobordism. Then $C = C_{sub} \cup H^n_{1} \cup \dotsb \cup H^n_k$, where all handles of $C_{sub}$ are subcritical, i.e. have index less than $n$. The Viterbo restriction induces an equivalence $\Tw \mathcal{W}(X \cup C_{sub}; \mathbb{Z}) \cong \Tw \mathcal{W}(X; \mathbb{Z})$  on the subcritical cobordism \cite{ganatra_generation}.
    Also by \cite{ganatra_generation},
    \begin{align*}
        \Tw \mathcal{W}(X \cup C_{sub}; \mathbb{Z})
        &\cong \Tw \mathcal{W}(T^*M_{std} \setminus (D_1 \amalg \dotsb \amalg D_k); \mathbb{Z})\\
        &\cong \Tw \mathcal{W}(T^*M_{std}; \mathbb{Z})/(D_1, \dotsc, D_k),
    \end{align*}
    where $D_1, \dotsc, D_k \subset T^*M_{std}$ are the Lagrangian co-cores of $H^n_1, \dotsc, H^n_k$; recall that this quotient category depends just on the subcategory split-generated by these disks by Remark \ref{rem: split_closure}. Now by Corollary \ref{cor: disks_cotangent_bundle_image}, $D_i \cong CW^*(M, D_i) \otimes T^*_q M$ in $\Tw \mathcal{W}(T^*M; \mathbb{Z})$ where $CW^*(M, D_i)$ is considered as an object of $\Tw \mathbb{Z}$, or equivalently a cochain complex over $\mathbb{Z}$. Any cochain complex of free Abelian groups splits as a direct sum of twisted complexes of the form $\mathbb{Z}[1] \overset{m}{\rightarrow}\mathbb{Z}$ for some integer $m$ and free groups $\mathbb{Z}$ (and their shifts). If $CW^*(M, D_i)$ has a  $\mathbb{Z}$-summand, then $D_i$ split-generates and hence $\Tw \mathcal{W}(T^*M_{std}; \mathbb{Z})/(D_1, \dotsc, D_k)$ is trivial. Otherwise, let $p_1, \dotsc, p_j$ be the collection of primes dividing $m$ in the summand $\mathbb{Z}[1] \overset{m}{\rightarrow}\mathbb{Z}$. Then the split-closure of $\mathbb{Z}[1] \overset{m}{\rightarrow}\mathbb{Z}$ coincides with that of the objects $\mathbb{Z}[1] \overset{p_1}{\rightarrow}\mathbb{Z}, \dotsc,  \mathbb{Z}[1] \overset{p_j}{\rightarrow}\mathbb{Z}$. So if $P$ denotes the set of primes obtained this way over all $D_1, \dotsc, D_k$, the split-closure of $(D_1, \dotsc, D_k)$ coincides with that of $ T^*_q M[1]\overset{p}{\rightarrow} T^*_q M \cong \mathrm{cone}(p \cdot \mathrm{Id}_{T^*_q M})$ where $p \in P$. Since $T^*_q M$ generates $\Tw \mathcal{W}(T^*M_{std}; \mathbb{Z})$, the subcategory split-generated by $(D_1, \dotsc, D_k)$ coincides with that split-generated by
    \[
        \left\{ \mathrm{cone}(p\cdot \mathrm{Id}_L) \mid p \in P, L \in \Tw \mathcal{W}(T^*M_{std}; \mathbb{Z}) \right\},
    \]
    and so $\Tw \mathcal{W}(X; \mathbb{Z}) \cong \Tw \mathcal{W}(T^*M_{std}; \mathbb{Z})\left[\frac{1}{P}\right]$ as desired. Also, $P$ is unique since $\mathcal{W}(X; \mathbb{F}_q)$ vanishes if $q \in P$ and $\Tw \mathcal{W}(X; \mathbb{F}_q) \cong \Tw \mathcal{W}(T^*M_{std}; \mathbb{F}_q)$ is non-trivial if $q \not \in P$ and $0 \not \in P$. 
    
    Finally, if $i^*: H^n(T^*M; \mathbb{Z}) \rightarrow H^n(X; \mathbb{Z})$ is not an isomorphism, then $[D_i] \in H^n(T^*M; \mathbb{Z}) \cong \mathbb{Z}$ is non-zero for some $D_i$ and so the algebraic intersection number $M \cdot D_i \in \mathbb{Z}$ is non-zero. Since this intersection number is precisely the Euler characteristic $\chi(CW^*(M, D_i))$ of the Floer cochains $CW^*(M, D_i)$, the direct sum decomposition of $CW^*(M, D_i)$ discussed above must contain a free group $\mathbb{Z}$, which implies that  $\Tw \mathcal{W}(X; \mathbb{Z})$ is trivial.
\end{proof}
	
\begin{remark}
    Abouzaid observed that Corollary \ref{cor: disks_cotangent_bundle_image}, and hence Theorem \ref{thm:Wein_subdom_classification}, extends to the case where $M$ has finite fundamental group and spin universal cover. Indeed, in that case any Lagrangian disk $L \subset T^*M$ lifts to a disk $\tilde L \subset T^*\tilde M$. Applying Corollary \ref{cor: disks_cotangent_bundle_image} to $\tilde L$, we obtain an isomorphism
    \[
        \tilde L \cong K^* \otimes T_q^*\tilde M
    \]
    for some complex $K^* \in \Tw\mathbb Z$. Presenting the upstairs category $\mathcal W(T^*\tilde M)$ using pulled-back Floer data, we can push this isomorphism back down to $\mathcal W(T^*M)$ to conclude
    \[
        L \cong K^* \otimes T_q^*M.
    \]
    
    The authors expect the same to hold if $\pi_1(M)$ is infinite, but that requires extending Theorem \ref{thm: Lagrangian_homology_twisted} to the non-compact case.
\end{remark}
	
As we have seen, any Weinstein subdomain $X \subset T^*M_{std}$ induces a localization (Viterbo) functor $\Tw\mathcal{W}(T^*M_{std}; \mathbb{Z}) \rightarrow \Tw\mathcal{W}(X; \mathbb{Z})$ and hence by Remark \ref{rem: split_closure} is associated to a split-closed subcategory of 
$\Tw\mathcal{W}(T^*M_{std}; \mathbb{Z})$. 
So Theorem \ref{thm:Wein_subdom_classification} can be viewed as a classification of the split-closed subcategories of $\Tw\mathcal{W}(T^*M_{std}; \mathbb{Z})$ coming from this geometric setting. 
The fact that these correspond to subsets of prime integers stems from the corresponding fact for $\Tw\mathbb{Z}$ (and the crucial Corollary \ref{cor: disks_cotangent_bundle_image}). More generally, Hopkins and Neeman 
\cite{neeman_chromatic} proved that split-closed subcategories of $D^b\mathrm{Mod}_R$ correspond to certain subsets of $\mathrm{Spec}(R)$; in the global setting, Thomason \cite{thomason_thick_subcategories} proved that split-closed subcategories of $D^b \mathrm{Coh}(X)$ that are closed under the tensor product correspond to certain closed subsets of $X$. Although the wrapped Fukaya category does not generally have a monoidal structure, we pose the open problem of classifying Fukaya categories of Weinstein subdomains of arbitrary Weinstein domains as a way of extending these results to the symplectic setting.

\subsection{Acknowledgements}

We would like to thank Mohammed Abouzaid and Paul Seidel for helpful discussions, particularly concerning Proposition \ref{prop: C* modules homotopic}. The first author was partially supported by an NSF postdoctoral fellowship, award 1705128; the second author was partially supported by the Simons Foundation through grant \#385573, the Simons Collaboration on Homological Mirror Symmetry.

\section{Proof of results}\label{sec: proof_of_results}
	
\subsection{Constructing Lagrangian disks}	\label{subsection: constructing_Lag_disks}

Our construction of Weinstein subdomains of a Weinstein domain $X^{2n}$ depends on the existence of certain Lagrangian disks near the index $n$ co-cores of  $X^{2n}$. Since a neighborhood of an index $n$ co-core is $T^*D^n$, it suffices to construct these Lagrangians in $T^*D^n$. In this section, we will exhibit these Lagrangian disks in $T^*D^n$ and study their isomorphism classes in the partially wrapped category $\Tw\mathcal{W}(T^*D^n, \partial D^n; \mathbb{Z})$. 

Recall that objects of $\Tw\mathcal{W}(T^*D^n, \partial D^n; \mathbb{Z})$ are twisted complexes of exact Lagrangians in $T^*D^n$ whose boundary is disjoint from $\partial D^n$. We use the canonical $\mathbb{Z}$-grading of $T^*D^n$ via the Lagrangian fibration by cotangent fibers. By \cite{ganatra_generation, chantraine_cocores_generate}, the category  $\Tw\mathcal{W}(T^*D^n, \partial D^n)$ is generated by the cotangent fiber $T^*_0 D^n \subset T^*D^n$ at the origin ${0\in D^n}$.
$\mathrm{end}^*(T^*_0 D^n, T^*_0 D^n)$, the	partially wrapped Floer cochains of $T^*_0D^n$, is quasi-isomorphic to $\mathbb{Z}$, hence there is a cohomologically full and faithful $A_\infty$-functor 
\[
	CW^*(T^*_0 D^n, \_\,)\colon \Tw\mathcal{W}(T^*D^n, \partial D^n) \rightarrow \mathrm{Mod}_\mathbb{Z}
\]
Here $\mathrm{Mod}_\mathbb{Z}$ denotes the dg-category of right $\mathbb{Z}$-modules. Since $T^*_0 D^n$ generates the partially wrapped Fukaya category, this functor has image $\Tw\mathbb{Z}$, the category of cochain complexes whose underlying graded Abelian group is free and finitely generated. The equivalence between  $\Tw\mathcal{W}(T^*D^n, \partial D^n)$ and $\Tw\mathbb{Z}$ takes an object $L$ of $\Tw\mathcal{W}(T^*D^n, \partial D^n)$ to $CW^*(T^*_0 D^n, L)$, viewed as cofibrant cochain complex over $\mathbb{Z}$. 

Let $D^n_- \subset T^*D^n$ be a negative perturbation of the zero-section $D^n$, i.e. the result of applying the negative wrapping $\sum q_i\partial_{p_i}$ to $D^n$ so that $\partial D^n_-$ is disjoint from the stop $\partial D^n$. Note that $D^n_-$ is Lagrangian isotopic to $T^*_0 D^n$ in the complement of $\partial D^n$ by geodesic flow. Hence $CW^*(T^*_0 D^n, L)$ is quasi-isomorphic to $CW^*(D^n_-, L)$. Since all Reeb chords out of $\partial D^n_-$ hit the stop in small time, $CW^*(D^n_-, L)$ is quasi-isomorphic to $CF^*(D^n_-, L)$, the \textit{unwrapped} Floer cochains which can be explicitly computed.

Next we review certain regular Lagrangian disks in $T^*D^n$ introduced by Abouzaid and Seidel in Section 3b of \cite{abouzaid_seidel_recombination} and study their isomorphism class in $\Tw\mathcal{W}(T^*D^n, \partial D^n)$. Let $U \subset S^{n-1}$ be a compact codimension zero submanifold with smooth boundary. Let $g: S^{n-1} \rightarrow \mathbb{R}$ be a $C^1$-small function  so that $g$ is strictly negative in the interior of $U$, zero on $\partial U$,  strictly positive on $S^{n-1} \backslash U$, and has zero as a regular value. Next, extend $g$ to a smooth function $f: \mathbb{R}^n \rightarrow \mathbb{R}$ so that $f$ is $C^0$-small in the unit disk and satisfies $f(tq) = |t|^2 f(q)$ for $|q| \ge 1/2, t \ge 1$. Let $\Gamma(df)$ be the graph of $df$ in $T^*\mathbb{R}^n$ and let $D_U = \Gamma(df) \cap T^*D^n$. Since $f$ is homogeneous for $|q| \ge 1/2$ and $0$ is a regular value of $g$, $D_U$ has Legendrian boundary which is disjoint from $\partial D^n$. Furthermore, there is a Lagrangian isotopy $\Gamma(d(sf))$ from $D_U$ to the zero-section $D \subset T^*D^n$ (which intersects the stop $\partial D$ precisely when $s=0$). After fixing a grading on $D$, the isotopy $\Gamma(d(sf))$ induces a preferred grading on $D_U$. In particular, $D_U$ with this $\mathbb{Z}$-grading  is an object of $\Tw\mathcal{W}(T^*D^n, \partial D^n)$. 

We now compute the isomorphism class of $D_U$ in $\Tw\mathcal{W}(T^*D^n, \partial D^n)$, following \cite{abouzaid_seidel_recombination}. Namely, as noted in Lemma 3.3 of \cite{abouzaid_seidel_recombination}, we can scale $f$ so that the intersection points of $ D^n_U$ and $D^n_-$ have small action and then by a classical computation of Floer, $CW(D_-, D_U)$ is quasi-isomorphic to Morse cochains of $f$. Since $\mathbb{R}^n$ is contractible, this is quasi-isomorphic to 	 $\tilde{C}^{*-1}(U)$, reduced Morse cochains on $U$. Hence, under the equivalence $CW^*(T^*_0 D^n, \_\,)$ between $\Tw\mathcal{W}(T^*D^n, \partial D; \mathbb{Z})$ and $\Tw\mathbb{Z}$, the image of the disk $D_U$ in $\mathrm{Mod}_\mathbb{Z}$ is quasi-isomorphic to $\tilde{C}^{*-1}(U)$. Note that since $CW^*(T_0^* D^n, T_0^* D^n) \cong \mathbb{Z}$, the image of the twisted complex $\tilde{C}^{*-1}(U) \otimes T^*_0 D^n$ under the functor $CW^*(T^*_0 D^n, \_\,)$ is also $\tilde{C}^{*-1}(U)$. Since the $CW^*(T_0^*D^n, \_\,)$ functor is cohomologically full and faithful, the disk $D_U$ is quasi-isomorphic to the twisted complex $\tilde{C}^{*-1}(U) \otimes D_- \cong \tilde{C}^{*-1}(U) \otimes T^*_0 D^n$ in $\Tw\mathcal{W}(T^*D^n, \partial D; \mathbb{Z})$.

\begin{remark}
	Our definition of the disk $D_U$ agrees with that in Abouzaid-Seidel \cite{abouzaid_seidel_recombination}. However, they make an inconsequential misidentification of the Floer complex with the Morse complex to obtain $\tilde C^{*-1}(U)$ as $CF^*(D_U, D)$ instead of $CF^*(D, D_U)$.
\end{remark}

	Using the disks $D_U$, we now show that for sufficiently large $n$ any Lagrangian in $T^*D^n$ (or twisted complex of Lagrangians) is quasi-isomorphic to a Lagrangian disk. Note that this is stronger than the statement that any Lagrangian is a twisted complex of disks, which follows from the fact that  $T^*_0 D^n$ generates $\Tw\mathcal{W}(T^*D^n, \partial D^n; \mathbb{Z})$. 
	\begin{proposition}\label{prop: disks_in_cotangent_disk}
		If $n \ge 5$, every object of $\Tw\mathcal{W}(T^*D^n, \partial D^n; \mathbb{Z})$ is quasi-isomorphic to an exact Lagrangian disk. In particular, $\mathcal{W}(T^*D^n, \partial D^n; \mathbb{Z}) \rightarrow \Tw \mathcal{W}(T^*D^n, \partial D^n; \mathbb{Z})$ is a quasi-equivalence. 
	\end{proposition}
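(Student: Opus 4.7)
The plan is to exploit the equivalence $\Tw\mathcal{W}(T^*D^n, \partial D^n; \mathbb{Z}) \simeq \Tw\mathbb{Z}$ and the identification $D_U \simeq \tilde{C}^{*-1}(U) \otimes T_0^*D^n$ from the preceding discussion. Since $\mathcal{W} \hookrightarrow \Tw\mathcal{W}$ is automatically cohomologically fully faithful, it suffices to establish essential surjectivity up to quasi-isomorphism. This reduces the problem to realizing every finite free cochain complex over $\mathbb{Z}$ as a single embedded Lagrangian disk in $T^*D^n$ disjoint from $\partial D^n$.

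I would proceed by splitting and then assembling. First, by the structure theorem for finitely generated $\mathbb{Z}$-modules, every object of $\Tw\mathbb{Z}$ is quasi-isomorphic to a direct sum of elementary summands of two types: shifted free generators $\mathbb{Z}[k]$ and shifted torsion cones $\mathrm{cone}(m \cdot \mathrm{Id}\colon \mathbb{Z}[k+1] \to \mathbb{Z}[k])$ for $m \ge 2$. Each summand I realize as a disk $D_{U_i}$ for a carefully chosen codimension-zero submanifold $U_i \subset S^{n-1}$, using a shift of the Lagrangian grading on $D_{U_i}$ to place the associated cochain complex in the correct degree. For a free summand, $U_i$ can be taken to be a tubular neighborhood of an embedded low-dimensional sphere. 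For a torsion cone of modulus $m$, $U_i$ is taken to be a tubular neighborhood in $S^{n-1}$ of a codimension-zero four-dimensional handle body whose first cohomology is $\mathbb{Z}/m$ and whose other reduced cohomology vanishes -- for instance, a neighborhood of $\mathbb{RP}^2 \subset S^4$ when $m = 2$, and more generally a handle body obtained from $D^4$ by attaching a $1$-handle and then a $2$-handle with degree-$m$ attaching map. The dimensional hypothesis $n \ge 5$ enters precisely here, to guarantee that these four-dimensional objects embed in $S^{n-1}$.

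Finally, I would combine the $D_{U_i}$ into a single Lagrangian disk via Lagrangian boundary connected sum along disjoint Legendrian arcs in $\partial T^*D^n \setminus \partial D^n$. Choosing the $U_i$ with pairwise disjoint closures in $S^{n-1}$ makes the Legendrian boundaries $\partial D_{U_i}$ automatically disjoint, and for $n \ge 5$ the ambient dimension leaves enough room to route the connecting arcs through contractible regions of the complement. The resulting single embedded disk should be quasi-isomorphic to $\bigoplus_i D_{U_i}$ in $\Tw\mathcal{W}(T^*D^n, \partial D^n; \mathbb{Z})$, realizing the desired complex. I expect the main obstacle to lie precisely in this last step: verifying rigorously that the boundary connected sum yields the direct sum rather than some nontrivial cone, which reduces to showing that the connecting Lagrangian $1$-handles introduce no wrapped Floer chords between different summands. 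This should follow from either a direct action-based argument using the explicit form of the Abouzaid--Seidel disks (placing each $D_{U_i}$ at very small action, and routing the handles through regions near the stop where chord spaces vanish) or from an invocation of a general Lagrangian surgery formula.
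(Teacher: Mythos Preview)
Your overall strategy is the same as the paper's: reduce to $\Tw\mathbb{Z}$ via $CW^*(T^*_0 D^n,\_\,)$, split an arbitrary finite free complex into elementary pieces $\mathbb{Z}[d]$ and $\mathbb{Z}[d+1]\overset{m}{\to}\mathbb{Z}[d]$, realize each piece by a single Abouzaid--Seidel disk $D_U$ (with grading shift), and assemble by boundary connected sum. The differences are in execution, and in two places the paper is cleaner than what you sketch.

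\textbf{Disjointness.} Your claim that pairwise disjoint $U_i\subset S^{n-1}$ force the Legendrian boundaries $\partial D_{U_i}$ to be disjoint is not correct: each $D_{U_i}$ is the graph of $df_i$ over \emph{all} of $D^n$, so its boundary is (after rescaling) a section over all of $S^{n-1}$, and two such sections meet over critical points of $g_i-g_j$. More to the point, the Lagrangians $D_{U_i}$ themselves intersect in the interior. The paper sidesteps this entirely by noting that $(T^*D^n,\partial D^n)$ is Weinstein homotopic to a boundary connected sum of several copies of itself, and placing each elementary disk in its own copy.

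\textbf{Connected sum equals direct sum.} Once the pieces are genuinely disjoint, your worry about a nontrivial cone evaporates for a simpler reason than action filtrations or surgery formulas. The space of framings on the isotropic arc is a $\mathbb{Z}$-torsor, and there is exactly one choice for which the $\mathbb{Z}$-grading on $L\natural K$ restricts to the given gradings on $L$ and $K$; with that framing one has $L\natural K\cong L\oplus K$ in $\Tw\mathcal{W}$. This is the argument the paper uses.

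\textbf{The elementary pieces.} For $\mathbb{Z}[d]$ the paper just uses $T^*_0 D^n$ with shifted grading, which is simpler than your sphere neighborhood. For the torsion cone the paper uses the $2$-complex $V=S^1\cup_m D^2$ rather than a thickened $4$-dimensional version; these are homotopy equivalent, but at $n=5$ you still owe an embedding into $S^4$, which is not automatic for an arbitrary $4$-manifold with boundary. The paper supplies the explicit map $z\mapsto\bigl((1-|z|^2)z,\,z^m\bigr)$ into $\mathbb{C}^2$ for this case, and invokes the Whitney trick for $n\ge 6$.
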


	\begin{proof} 
		An arbitrary object of 
		$\Tw \mathcal{W}(T^*D^n, \partial D^n; \mathbb{Z})$ can be identified
		with some finite dimensional cochain complex of free Abelian groups via the quasi-equivalence $CW^*(T^*_0 D^n, \_\,)$. 
		Every such cochain complex $(C^*, \partial)$ splits as a direct sum of twisted complexes of the form 
		$\mathbb{Z}[d+1] \overset{m}{\rightarrow}\mathbb{Z}[d]$ for some integer $m$ or complexes $\mathbb{Z}[d]$ with no differential. To see this,
		we use the fact that  the short exact sequence $0 \rightarrow \ker  \ \partial_n \rightarrow C_n \rightarrow \mathrm{im} \partial_n \rightarrow 0$ splits since $\mathrm{im} \partial_n$ is free. 

		Next, we recall that given two exact Lagrangians $L, K \subset X$ and a framed isotropic arc between their Legendrian boundaries $\partial L, \partial K \subset \partial X$, one can form a new exact Lagrangian $L \natural K \subset X$, the isotropic boundary connected sum of $L, K$. If $L, K$ are $\mathbb{Z}$-graded Lagrangians, then there is a choice of framing for the isotropic arc (the space of such choices up to homotopy is a $\mathbb{Z}$-torsor) so that $L\natural K$ also has a $\mathbb{Z}$-grading that restricts to the $\mathbb{Z}$-grading of $L$ and $K$ and hence $L\natural K$ is quasi-isomorphic to $L \oplus K$ in $\Tw\mathcal{W}(X)$; whenever we discuss the isotropic connected sum of two Lagrangians we mean the sum using any isotropic arc with this framing. The actual geometric disk will depend on the homotopy class of the arc, but since we are only concerned with the resulting object of $\mathcal W(X)$ we will ignore the distinction.

        Returning to $X = T^*D^n$, note that we can assume that  any two Lagrangians $L, K \subset T^*D^n$ are disjoint since we can view $T^*D^n$ as the result of gluing two copies of $T^*D^n$ together and place $L$ in one copy and $K$ in the other copy. So in light of the above discussion and the splitting from the previous paragraph, it suffices to prove that the twisted complexes $\mathbb{Z}[d+1] \overset{m}{\rightarrow} \mathbb{Z}[d]$ and $\mathbb{Z}[d]$ are quasi-isomorphic to embedded Lagrangian disks. The latter complex is quasi-isomorphic to $T^*_0 D^n$ with the appropriate grading so it suffices to prove that $\mathbb{Z}[d+1] \overset{m}{\rightarrow}  \mathbb{Z}[d]$ is quasi-isomorphic to a disk.

		As noted in Abouzaid-Seidel \cite{Abouzaid_Seidel}, for $n \ge 5$
		and any $m \ge 0$, there is a codimension 0 Moore space $U_m \subset S^{n-1}$ with $\tilde{C}^*(U_m) \cong \mathbb{Z}[-1] \overset{m}{\rightarrow} \mathbb{Z}[-2]$. For example, consider the CW complex $V$ obtained by attaching $D^2$ to $S^1$ along a degree $m$ map $S^1 \rightarrow S^1$; then $V$ embeds into $S^{n-1}$ for $n \ge 6$ by the Whitney trick and for $n =5$
		by the explicit map $D^2 \rightarrow \mathbb{C}^2$ given by $z\rightarrow ((1-|z|^2)z, z^m)$. Let $U_m$ be a neighborhood of $V$ in $S^{n-1}$. 
		Then $D_{U_m}$  
		is quasi-isomorphic to $\mathbb{Z}[-2] \overset{m}{\rightarrow} \mathbb{Z}[-3]$. Finally, we shift the grading on $D_{U_m}$ by $d+3$ and the resulting disk  $D_{U_m}[d+3]$ is quasi-isomorphic to 
		$\mathbb{Z}[d+1] \overset{m}{\rightarrow} \mathbb{Z}[d]$,   as desired.
	\end{proof}
	We observe that not every object of $\Tw\mathcal{W}(T^*D^n, \partial D^n; \mathbb{Z})$ is quasi-isomorphic to a disk $D_U$. This is because $CW^*(D_-, D_U)$ is a cochain complex that is supported between degrees $0$ and $n-1$ (since $U \subset S^{n-1}$) or a shift thereof (if we shift the grading on $D^n_U$) while  a general cochain complex can have arbitrarily wide support. However, Proposition \ref{prop: disks_in_cotangent_disk} shows that every object of $\Tw\mathcal{W}(T^*D^n, \partial D^n; \mathbb{Z})$ is quasi-isomorphic to the boundary connected sum of possibly several different $D_U$, with possibly different gradings. 	
	
	\subsection{Constructing subdomains}
	
	Now we use the Lagrangian disks from the previous section to construct Weinstein subdomains of a Weinstein domain $X$ and prove Theorem \ref{thm: exotic_subdomains}. 
	As stated in Remark \ref{rem: stops},  the construction of subdomains also holds  when the ambient Weinstein domains has stops. 	The most important case for us is when $X = (T^*D^n, \partial D^n)$, the stopped domain considered in the previous section. As we will see, Theorem \ref{thm: exotic_subdomains} for arbitrary Weinstein domains follows from this case.
	
	In the following, we say a stopped Weinstein domain $(X_0, \Lambda_0)$ is a Weinstein subdomain of $(X, \Lambda)$ if $X = X_0 \cup C$ for some Weinstein cobordism $C$ which is trivial along $\Lambda_0 = \Lambda$. In particular, there is a smoothly trivial regular Lagrangian cobordism between $\Lambda_0$ and $\Lambda_1$ in $X\backslash X_0$ which allows to identify the linking disk of $\Lambda_0$ in $X_0$ with the linking disk of $\Lambda$ in $X$. We say that this cobordism is flexible if the attaching spheres of the index $n$ handles are loose in the complement of $\Lambda_0$. We also say that two Weinstein subdomains $X_0, X_1 \subset X$ are Weinstein homotopic if the following holds: there is a homotopy of Weinstein Morse functions $f_t, 0\le t \le 1$, on $X$ that have $c$ as a regular level set for all $t$ and $X_0$ and $X_1$ are the $c$-sublevel sets of $f_0$ and $f_1$, respectively.

\begin{theorem}\label{thm: p-handles}
		Let $n \ge 5$. For any finite collection of prime numbers $P$, that is possibly empty or contains $0$, 
		there is a Legendrian sphere $\Lambda_P \subset \partial B^{2n}_{std}$ formally isotopic to the standard unknot $\Lambda_\varnothing$ so that $(B^{2n}_{std}, \Lambda_P)$ embeds as a Weinstein subdomain of $(B^{2n}_{std}, \Lambda_{\varnothing}) = (T^*D^n, \partial D^n)$ with the following properties: 
		\begin{enumerate}
			\item The Viterbo restriction functor
			\[
			\Tw\mathcal{W}(B^{2n}_{std}, \Lambda_\varnothing; \mathbb{Z}) \rightarrow
			\Tw\mathcal{W}(B^{2n}_{std}, \Lambda_P; \mathbb{Z})
			\]
			induces an equivalence
			\[
			\Tw\mathcal{W}(B^{2n}_{std}, \Lambda_P; \mathbb{Z}) \cong \Tw\mathcal{W}(B^{2n}_{std}, \Lambda_\varnothing; \mathbb{Z})\left[\frac{1}{P}\right] \cong \Tw\mathbb{Z}\left[\frac{1}{P}\right].
			\]
			\item $(B^{2n}_{std}, \Lambda_P)$ embeds as a Weinstein subdomain of $(B^{2n}_{std}, \Lambda_Q)$ if and only if $Q \subset P$ or $0 \in P$. In such cases, we can construct such an embedding with the property that the Weinstein cobordism between unstopped domains is trivial, i.e. $\partial B^{2n}\times[0,1]$, and if $R \subset Q \subset P$, the composition $(B^{2n}_{std}, \Lambda_P) \subset (B^{2n}_{std}, \Lambda_Q) \subset (B^{2n}_{std}, \Lambda_R)$ is Weinstein homotopic to 
			$(B^{2n}_{std}, \Lambda_P) \subset (B^{2n}_{std}, \Lambda_R)$ obtained by viewing $P \subset R$. 
			\item There is a smoothly trivial regular Lagrangian cobordism $L \subset \partial B^{2n}_{std}\times [0,1]$ with $\partial_- L = \Lambda_P$ and $\partial_+ L =\Lambda_Q$ if and only if  $Q \subset P$ or $0 \in P$.
			Furthermore, for two disjoint subsets of primes  $P_1, P_2$, the Legendrian sphere
			$\Lambda_{P_1 \amalg P_2}$ is the isotropic connected sum $\Lambda_{P_1} \natural \Lambda_{P_2}$ of $\Lambda_{P_1}, \Lambda_{P_2}$ embedded
			in disjoint Darboux balls in $\partial B^{2n}_{std}$. 
			
			\item  If $0 \in P$, then $\Lambda_P \subset \partial B^{2n}_{std}$ is loose.
		\end{enumerate}
	\end{theorem}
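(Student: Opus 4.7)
We construct $\Lambda_P$ explicitly from the disks $D_U$ of the preceding subsection. For each prime $p$, fix a 2-dimensional Moore space $V_p \subset S^{n-1}$ with $\tilde{C}^*(V_p) \cong (\mathbb{Z}[-1] \xrightarrow{p} \mathbb{Z}[-2])$, embedded as in the proof of Proposition~\ref{prop: disks_in_cotangent_disk}, and let $U_p$ be a regular neighborhood. For $P = \{p_1, \dotsc, p_k\}$, set $U_P := \coprod_i U_{p_i} \subset S^{n-1}$, embedded in disjoint balls, and consider the associated Lagrangian disk $D_{U_P} \subset T^*D^n$. Under the equivalence $\Tw\mathcal{W}(T^*D^n, \partial D^n; \mathbb{Z}) \cong \Tw\mathbb{Z}$ from Subsection~\ref{subsection: constructing_Lag_disks}, the disk $D_{U_P}$ decomposes (up to shift) as $\bigoplus_{p \in P} \mathrm{cone}(p \cdot \mathrm{Id}_{T^*_0 D^n})$ by the Morse-vs-Floer computation. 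We then define $(B^{2n}_{std}, \Lambda_P)$ as the complement, after a Weinstein homotopy, of a Weinstein regular neighborhood of $D_{U_P}$ in $(T^*D^n, \partial D^n)$; the resulting stop $\Lambda_P$ is a single Legendrian sphere, formally isotopic to the unknot since the formal class of a Legendrian sphere in $S^{2n-1}$ is determined by its smooth type. When $0 \in P$, we include in the construction an explicit loose chart to force $\Lambda_P$ to be loose.

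Property (1) then follows from the Ganatra-Pardon-Shende theorem in the stopped setting: the Viterbo restriction is the quotient by the split-generated subcategory of $D_{U_P}$, which by Remark~\ref{rem: split_closure} is exactly localization away from $P$. For property (2), if $Q \subset P$, writing $U_P = U_Q \amalg U_{P \setminus Q}$ shows that $(B^{2n}_{std}, \Lambda_P)$ is obtained from $(B^{2n}_{std}, \Lambda_Q)$ by further removing a neighborhood of $D_{U_{P \setminus Q}}$, and compositional compatibility up to Weinstein homotopy follows from the commutativity of disjoint handle removals; the case $0 \in P$ reduces to flexibility. The converse --- that $Q \not\subset P$ and $0 \notin P$ prohibit such an embedding --- comes from (1) applied over $\mathbb{F}_q$ for $q \in Q \setminus P$: the wrapped category of the source is nonvanishing over $\mathbb{F}_q$ while the target vanishes. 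Property (3) uses the Lagrangian isotopy $\Gamma(d(sf))$ from $D_{U_P}$ to the zero-section, projected to the contact boundary, to exhibit the smoothly trivial regular Lagrangian cobordism in $\partial B^{2n}_{std} \times [0,1]$; the connected-sum identity $\Lambda_{P_1 \amalg P_2} = \Lambda_{P_1} \natural \Lambda_{P_2}$ is immediate from the disjoint-union definition of $U_P$, and the converse direction again uses the $\mathbb{F}_q$-argument. For (4), if $0 \in P$ then $\Tw\mathcal{W}(B^{2n}_{std}, \Lambda_P; \mathbb{Z}) \cong 0$ by (1), which forces $\Lambda_P$ to be loose; alternatively, this is visible directly from the explicit loose chart.

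The main obstacle is property (1): we must verify that removing a Weinstein regular neighborhood of $D_{U_P}$ from $(T^*D^n, \partial D^n)$ produces a Weinstein subdomain of the form $(B^{2n}_{std}, \Lambda_P)$ with a \emph{single} Legendrian stop (after Weinstein homotopy and handle cancellations), and that under the identification $\Tw\mathcal{W}(T^*D^n, \partial D^n) \cong \Tw\mathbb{Z}$ the Viterbo transfer agrees with the algebraic localization $\Tw\mathbb{Z} \to \Tw\mathbb{Z}[\tfrac{1}{P}]$. For this we exploit that $D_{U_P}$ is a regular Lagrangian disk, so its Weinstein neighborhood is a standard handle whose removal is well-understood, and then apply the generation and quotient theorems of Ganatra-Pardon-Shende.
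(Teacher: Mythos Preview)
Your construction has a genuine gap at its core. Carving out the regular disk $D_{U_P}$ from $(T^*D^n,\partial D^n)$ does \emph{not} leave you with a ball: as the paper computes, $T^*D^n\setminus D_U$ is Weinstein homotopic to the subcritical domain $T^*(S^{n-1}\times D^1)=B^{2n}_{std}\cup H^{n-1}$, which carries an extra index $n{-}1$ handle. The paper's actual construction is to attach a \emph{further} flexible index $n$ handle $H^n_{flex}$ along a loosened version of the carving Legendrian; only then does the h-principle for flexible Weinstein domains identify the result with $B^{2n}_{std}$, and the image of the original stop $\partial D^n$ becomes $\Lambda_p$. Showing that this augmented domain is still a Weinstein \emph{subdomain} of $(T^*D^n,\partial D^n)$ is nontrivial and uses a result from \cite{Lazarev_crit_points} to factor the original cobordism as $H^n_{flex}\cup H^{n-1}\cup H^n_{\Lambda'}$. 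Your phrase ``after a Weinstein homotopy and handle cancellations'' hides exactly this step, and your claim that removing a standard handle neighborhood is ``well-understood'' does not address why the answer is a ball rather than $B^{2n}_{std}\cup H^{n-1}$.

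Two further points. First, your argument for (4) is incorrect: vanishing of the partially wrapped category does \emph{not} force a Legendrian to be loose (there exist non-loose Legendrians with trivial holomorphic-curve invariants). The paper proves looseness of $\Lambda_0$ geometrically, by observing that the stop $S^{n-1}\times\{0\}\subset\partial T^*(S^{n-1}\times D^1)$ crosses the belt sphere of the $(n{-}1)$-handle once and then arguing that this loose chart survives the flexible handle attachment. Second, in your ``only if'' argument for (2) you have the source and target of the Viterbo functor reversed: if $(B^{2n}_{std},\Lambda_P)\subset(B^{2n}_{std},\Lambda_Q)$ then the localization goes from $\Tw\mathcal{W}(B^{2n}_{std},\Lambda_Q)$ to $\Tw\mathcal{W}(B^{2n}_{std},\Lambda_P)$, and over $\mathbb{F}_q$ with $q\in Q\setminus P$ it is the \emph{source} that vanishes while the \emph{target} does not---which is indeed impossible for a localization, but not for the reason you state.
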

	In particular, we have a sequence of Legendrians 
	$$\Lambda_{unknot} = \Lambda_\varnothing, \Lambda_2, \Lambda_{2,3}, \Lambda_{2,3,5},\Lambda_{2,3,5,7}, \dotsc, \Lambda_0 = \Lambda_{loose}
	$$
	in $\partial B^{2n}_{std}$
	and Lagrangian cobordisms in $\partial B^{2n}_{std} \times [0,1]$ connecting consecutive Legendrians interpolating between $\Lambda_{unknot}$ and $\Lambda_{loose}$, analogous to the sequence of subdomains in Theorem \ref{thm: exotic_subdomains}.
	We note that such Legendrians do not exist for $n = 2$ as proven in \cite{cornwell_ng_sivek_Lagrangian_concordance}: if $L^2$ is a \textit{decomposable} Lagrangian cobordism (a condition similar to regularity) with negative end $\Lambda$ and positive end $\Lambda_\varnothing$, then either $\Lambda = \Lambda_\varnothing$ or $\Lambda$ is stabilized in the sense of \cite{Murphy11},  so $\Tw\mathcal{W}(B^4_{std},  \Lambda; \mathbb{Z}) \cong \Tw\mathbb{Z}$ or $\Tw\mathcal{W}(B^4_{std},  \Lambda; \mathbb{Z}) \cong 0$ are the only possibilities.
	\begin{remark}
	The construction of the isotropic connected sum $\Lambda_1 \natural \Lambda_2$ of two Legendrians $\Lambda_1, \Lambda_2$ in the statement of Theorem \ref{thm: p-handles} above 
	is similar to the boundary connected sum of two Lagrangians discussed in the proof of Proposition \ref{prop: disks_in_cotangent_disk} 
	(the former happens on the boundary of the latter) 
	and also depends on a framed isotropic arc between $\Lambda_1$ and $\Lambda_2$.
	However if $\Lambda_2$ is contained in a Darboux chart of $(Y, \xi)$ disjoint from $\Lambda_1$, then the isotropic connected sum $\Lambda_1 \natural \Lambda_2$ is actually independent of the isotropic arc and its framing; this is because we can isotope $\Lambda_2$ to a small neighborhood of $\Lambda_1$ via the original isotropic arc and then isotope it back to its original position using a new isotropic arc.
	
	More precisely, we can identify the Darboux chart containing $\Lambda_2$ with the cotangent bundle of a small piece of the framing-thickened arc and use this to produce a family of Darboux charts. If the two arcs have the same framing at the endpoints, the resulting family of Darboux charts is a loop, which means that $\Lambda_2$ returns to itself.
	\end{remark}
	\begin{proof}[Proof of Theorem \ref{thm: p-handles}]
		We prove this theorem in several stages: first we construct $\Lambda_p$ 
		when $p$ is a single prime and prove that it has the claimed geometric properties,  then we 
		construct $\Lambda_P$ for a general set of primes $P$, and finally we prove our claims about the Fukaya category of $(B^{2n}_{std}, \Lambda_P)$. 
		
		\subsubsection{$\Lambda_p$ for a single prime $p$} 
		
		We first consider the case when the collection of primes $P$ consists of a single prime $p$.
		As discussed in the previous section, let $U_p \subset S^{n-1}$ be a fixed $p$-Moore space. Then the Lagrangian disk $D_p: = D_{U_p} \subset (T^*D^n, \partial D^n)$ is isomorphic to
		$T^*_0 D^n[1]\overset{p}{\rightarrow} T^*_0 D^n$ in $\Tw \mathcal{W}(T^*D^n, \partial D^n; \mathbb{Z})$. 
		Also, if $p = 0$, we set $U = S^{n-1}$  (as a full subset of $S^{n-1}$) and form 
		$D_0 := D^n_{S^{n-1}}$, which is Lagrangian isotopic in $(T^*D^n, \partial D^n)$ to the the cotangent fiber $T^*_0 D^n$. If $p$ is the empty set, we set 
		$U = B^{n-1}\subset S^{n-1}$ and form $D_\varnothing := D^n_{B^{n-1}}$, which is a small Lagrangian disk that is disjoint from the zero-section $D^n \subset T^*D^n$; note that any two such small Lagrangian disks are isotopic in $(T^*D^n, \partial D^n)$. In particular, $D_\varnothing$ is the zero object in $\Tw \mathcal{W}(T^*D^n, \partial D^n; \mathbb{Z})$. 
		To construct $(B^{2n}_{std}, \Lambda_P)$, we will \textit{carve out} these Lagrangian disks as we now explain.
		
		In general, given a Liouville domain $X^{2n}$ and an exact Lagrangian disk $D^n \subset X^{2n}$ with Legendrian boundary, there is a Liouville subdomain $X_0 \subset X$ (which we say is obtained by \textit{carving out} $D^n$ from $X$) and a Legendrian sphere $\Lambda \subset \partial X_0$ so that $X= X_0 \cup  H^n_{\Lambda}$ and the co-core of $H^n_{\Lambda}$ is $D^n$; see \cite{EGL} for details. If $X$ is a Weinstein domain and $D^n \subset X$ is a regular Lagrangian, then $X_0 \subset X$ is a Weinstein subdomain. The disks $D_p \subset T^*D^n$ we consider are indeed regular; in fact $D_p = \Gamma(df)$ is isotopic through Lagrangians with Legendrian boundary $(D_p)_s = \Gamma(sdf) \cap T^*D^n$ to the zero-section $D^n \subset T^*D^n$. Therefore,  $T^*D^n \backslash D_p$ is homotopic to the Weinstein domain $T^*D^n \backslash D^n$, which is actually the subcritical domain $T^*(S^{n-1} \times D^1) = B^{2n}_{std} \cup H^{n-1}$. Since $D_p$ is disjoint from $\partial D^n$, we can consider $(T^*D^n, \partial D^n) \backslash D_p$ as $T^*(S^{n-1}\times D^1)$ with some stop, namely the image of $\partial D^n$. 
		
		Since the subdomain $T^*D^n\backslash D_p$ is obtained by carving out $D_p$,  there is a Legendrian $\Lambda \subset \partial (T^*D^n \backslash D_p)$ disjoint from $\partial D^n$ so that $(T^*D^n, \partial D^n) \backslash D_p \cup H^n_\Lambda = (T^*D^n, \partial D^n)$ and the co-core of $H^n_\Lambda$ is $D_p$. Because $n \ge 5$, there is a unique loose Legendrian $\Lambda_{loose} \subset \partial (T^*D^n \backslash D_p)$ that is formally isotopic to $\Lambda$ and is loose in the complement of $\partial D^n$; see \cite{Murphy11}. Next we form the stopped domain $(T^*D^n, \partial D^n) \backslash D_p \cup H^n_{flex}$ by attaching the handle $H^n_{flex}$ along $\Lambda_{loose}$. 
		We note that the ambient Weinstein domain $T^*D^n \backslash D_p \cup H^n_{flex}$ is flexible since $T^*D^n \backslash D_p$ is subcritical and $H^n_{flex}$ is attached a loose Legendrian.
		Furthermore, it is formally symplectomorphic to the standard Weinstein ball since $\Lambda_{loose}$ is formally isotopic to $\Lambda$ (and attaching a handle to $\Lambda$ reproduces $B^{2n}_{std}$). Therefore by the h-principle for flexible Weinstein domains \cite{CE12}, $T^*D^n \backslash D_p \cup H^n_{flex}$  is Weinstein homotopic to $B^{2n}_{std}$. Under this identification with $B^{2n}_{std}$, the stop  $\partial D^n  \subset T^*D^n \backslash D_p \cup H^n_{flex}$ 
		becomes some Legendrian in $\partial B^{2n}_{std}$ which we call $\Lambda_p$. That is, we set
		$$
		(B^{2n}_{std}, \Lambda_p) := (T^*D^n, \partial D^n) \backslash D_p \cup H^n_{flex}
		$$
		We will show that $(B^{2n}_{std}, \Lambda_p)$ satisfies the claimed properties.

		First we show that
		$(B^{2n}_{std}, \Lambda_p)$ is a Weinstein subdomain of $(T^*D^n, \partial D^n)$. Note that $(B^{2n}_{std}, \Lambda_\varnothing)$ is precisely $(T^*D^n, \partial D^n)$. This is because $(T^*D^n, \partial D^n) \backslash D_\varnothing = (T^*D^n, \partial D^n) \cup H^{n-1}$ and the Legendrian $\Lambda$ from the previous paragraph intersects the belt sphere of $H^{n-1}$ exactly once; so $H^{n-1} \cup H^n_{flex}$ are cancelling handles and hence 
		$$
		(T^*D^n, \partial D^n) \backslash D_\varnothing  \cup H^{n}_{flex} = (T^*D^n, \partial D^n) \cup H^{n-1}\cup H^n_{flex} = (T^*D^n, \partial D^n)
		$$
		Now we consider the case when $p$ is a (non-zero) prime.
		It is clear that $(T^*D^n, \partial D^n) \backslash D_p$ is a subdomain of $(T^*D^n, \partial D^n)$ by construction; we claim that it is still a subdomain even after attaching the flexible handle $H^n_{flex}$ to $(T^*D^n, \partial D^n) \backslash D_p$. To see this, let $C$ be the  Weinstein cobordism between  $(T^*D^n, \partial D^n) \backslash D_p$ and $(T^*D^n, \partial D^n)$ given by the handle $H^n_\Lambda$ (whose co-core is $D_p$). 
		By \cite{Lazarev_crit_points}, we can Weinstein homotope $C$, in the complement of $\partial D^n$, to a Weinstein cobordism $H^n_{flex} \cup H^{n-1}\cup H^n_{\Lambda'}$, where $H^n_{flex}$ is attached along $\Lambda_{loose}$ and
		$H^{n-1} \cup H^n_{\Lambda'}$ is a smoothly trivial Weinstein cobordism whose attaching spheres are disjoint from $\partial D^n$. 
		So we have the following equalities, up to Weinstein homotopy:
		\begin{eqnarray}
		(B^{2n}_{std}, \Lambda_p) \cup H^{n-1}\cup H^n_{\Lambda'} &=& 
		(T^*D^n, \partial D^n) \backslash D_p \cup H_{flex} \cup H^{n-1}\cup H^n_{\Lambda'}\\
		&=&  (T^*D^n, \partial D^n) \backslash D_p \cup C = (T^*D^n, \partial D^n)
		\end{eqnarray}
		which show that $(B^{2n}_{std}, \Lambda_p)$ 
		is a subdomain of $(B^{2n}_{std}, \Lambda_\varnothing) = (T^*D^n, \partial D^n)$.
		Furthermore, the construction in \cite{Lazarev_crit_points} shows that $\Lambda'$ is loose (but not in the complement of $\Lambda_{loose}$ or $\partial D^n$) since $\Lambda$ is loose (but not in the complement of $\partial D^n$). So the Weinstein cobordism $H^{n-1}\cup H^n_{\Lambda'}$ is flexible (but not in the complement of the stop $\partial D^n$) and therefore is homotopic to $\partial B^{2n}_{std}\times[0,1]$.
		
		Since the attaching spheres of $H^{n-1}, H^n_{\Lambda'}$ are disjoint from $\partial D^n$,  we can view $\partial D^n \times [0,1]$ as a trivial Lagrangian cobordism between $\partial D^n$ in $T^*D^n \backslash D_p \cup H^n_{flex}$ and $\partial D^n$ in $T^*D^n$. Under our identifications, this produces a smoothly trivial regular Lagrangian cobordism (regular in that the Liouville vector field can be made tangent to it) between $\Lambda_p$ and $\Lambda_\varnothing$ in $\partial B^{2n}_{std}\times [0,1]$ as desired.
		We also observe that $\Lambda_p$ is formally Legendrian isotopic to $\Lambda_\varnothing$ in $\partial B^{2n}_{std}$ because the attaching spheres $\Lambda$ and $\Lambda_{loose}$ are formally Legendrian isotopic in the complement of $\partial D^n$. More precisely, note that
		$\partial D^n \amalg \Lambda$ and $\partial D^n \amalg \Lambda_{loose}$ are formally isotopic Legendrian links. 
		Furthermore, there is a genuine Legendrian isotopy from
		$\Lambda$ to $\Lambda_{loose}$ (but not in the complement of $\partial D^n$) and so this extends to a Legendrian isotopy from 
		$\partial D^n \amalg \Lambda_{loose}$ to $\overline{\partial D^n} \amalg \Lambda$, where $\overline{\partial D^n}$ 
	is some other Legendrian that becomes $\Lambda_p$ after handle attachment to $\Lambda$.  Since a genuine Legendrian isotopy preserves formal Legendrian isotopies, 
		$\partial D^n \amalg \Lambda$ and $\overline{\partial D^n} \amalg \Lambda$ are also formally Legendrian isotopic links. So when we attach a handle to $\Lambda$ to get $B^{2n}_{std}$, $\partial D^n$ and $\overline{\partial D}^n$ are still formally Legendrian isotopic in $\partial B^{2n}_{std}$, which is precisely the statement that $\Lambda_\varnothing, \Lambda_p$ are formally Legendrian isotopic.

		Next we consider the case when $p = 0$. Recall that in this case, $D_0$ is the cotangent fiber $T^*_0 D^n \subset T^*D^n$. Then $(T^*D^n, \partial D^n) \backslash D_p$ is $(T^*(S^{n-1} \times D^1), S^{n-1} \times \{0\})$. We note that $S^{n-1} \times \{0\} \subset \partial T^*(S^{n-1} \times D^1)$ is loose since this Legendrian crosses the belt sphere of the index $n-1$ handle (corresponding to the index $n-1$ Morse critical point of $S^{n-1}$) exactly once; see \cite{CE12} for this looseness criterion. 
		To construct $(B^{2n}_{std}, \Lambda_0)$ from  $(T^*D^n, \partial D^n) \backslash D_p = (T^*(S^{n-1} \times D^1), S^{n-1} \times \{0\})$, we attach an index $n$ handle $H^n_{flex}$ along the Legendrian $\Lambda_{loose}$ that is loose in the complement of $\partial D^n = S^{n-1}\times \{0\}$ (and is formally isotopic to the Legendrian $\Lambda$).
		Since $\partial D^n = S^{n-1}\times \{0\}$ is loose and $\Lambda_{loose}$ is loose in the complement of $\partial D^n$, then $\partial D^n = S^{n-1}\times \{0\}$ is in fact also loose in the complement of $\Lambda_{loose}$, i.e. $\partial D^n = S^{n-1}\times \{0\}$ and $\Lambda_{loose}$ form a loose link; see \cite{CE12} for an argument explaining this fact.
		In particular, the loose chart of $\partial D^n$ persists under attaching the handle $H^{n}_{flex}$ along $\Lambda_{loose}$ and so $\partial D^n \subset (T^*D^n, \partial D^n) \backslash D_p \cup H^n_{flex}$ is still loose. By definition, this means that the stop $\Lambda_0$ in $(B^{2n}_{std}, \Lambda_0)$ is loose as desired. As in the previous paragraph, $(B^{2n}_{std}, \Lambda_0)$ is a Weinstein subdomain of $(B^{2n}_{std}, \Lambda_\varnothing)$. Hence there is a regular Lagrangian cobordism from a loose Legendrian to the Legendrian unknot, as originally proven in \cite{EM, Lazarev_reg_lag}.
		
		This proves all of claims (2), (3), (4) when $P$ consists of a single element.

		\subsubsection{$\Lambda_P$ for a collection of primes $P$}
		
		Now we construct $\Lambda_P$ when $P=\{p_1, \dotsc, p_k\}$ is a collection of primes with multiple elements.
		We consider disjoint Weinstein balls $B^{2n}_{std,i}$ so that $\Lambda_{p_i}\subset \partial B^{2n}_{std,i}$
		and do a \textit{simultaneous} boundary connected sum to the $B^{2n}_{std,i}$ and $\Lambda_i$,
		as in the construction of regular Lagrangians \cite{EGL}:
		$$
		(B^{2n}_{std}, \Lambda_P):= (B^{2n}_1, \Lambda_{p_1}) \natural \dotsb \natural (B^{2n}_k, \Lambda_{p_k})
		$$
		Namely, we attach index 1 Weinstein handles to the disjoint union of Weinstein balls  $$B^{2n}_{std, 1} \amalg \dotsb \amalg B^{2n}_{std, k}$$ so that the attaching spheres of these index 1 handles, i.e. two points, are on different $\Lambda_i$; we simultaneously do Legendrian surgery on the $\Lambda_i$ via isotropic arcs in the 1-handles. The resulting Legendrian $\Lambda_P$ is connected  and in fact coincides with the usual  isotropic connected sum of Legendrians $\Lambda_{p_1}, \dotsc, \Lambda_{p_k}$ embedded in disjoint Darboux balls in a \textit{single}
		$\partial B^{2n}_{std}$.
		This also shows that up to Legendrian isotopy, $\Lambda_P$ does not depend on the order of the set $P$.
		
		Next we show that $(B^{2n}_{std}, \Lambda_P)$ is a Weinstein subdomain of
		$(B^{2n}_{std}, \Lambda_Q)$ if $Q \subset P$.
		Via our previous identification, $(B^{2n}_{std}, \Lambda_P)$ is the same as
		\begin{equation}\label{eqn: several_primes_carve_out}
		((T^*D^n, \partial D^n)\backslash D_{p_1} \cup H^n_{flex}) \natural \dotsb \natural ((T^*D^n, \partial D^n)\backslash D_{p_k} \cup H^n_{flex}) 
		\end{equation}
		where we choose points on each $\partial D^n$ to do the simultaneous boundary connected sum. So if $Q \subset P$, $(B^{2n}_{std}, \Lambda_P)$ differs from $(B^{2n}_{std}, \Lambda_Q)$ by a boundary connected sum with
		\[
		(T^*D^n, \partial D^n)\backslash D_{p} \cup H^n_{flex}
		\]
		for all $p \in P \backslash Q$.
		We saw previously that 
		$
		(T^*D^n, \partial D^n)\backslash D_{p} \cup H^n_{flex}$ is a  subdomain 
		of $(T^*D^n, \partial D^n)$ and hence $(B^{2n}_{std}, \Lambda_P)$ is a subdomain of $(B^{2n}_{std}, \Lambda_Q)$ boundary connected sum with several copies of $(T^*D^n, \partial D^n)$, one for each $p \in P\backslash Q$. Since doing boundary connected sum with $(T^*D^n, \partial D^n)$ does not change the Weinstein homotopy type, the latter domain is still $(B^{2n}_{std}, \Lambda_Q)$ and so  $(B^{2n}_{std}, \Lambda_P)$ is a subdomain of $(B^{2n}_{std}, \Lambda_Q)$ as desired. 
		This also shows that if $R \subset Q \subset P$, the Weinstein cobordism $(B^{2n}_{std}, \Lambda_R) \setminus (B^{2n}_{std}, \Lambda_P)$ is homotopic to the concatenation of Weinstein cobordisms  $(B^{2n}_{std}, \Lambda_Q)\setminus (B^{2n}_{std}, \Lambda_P)$
		and  $(B^{2n}_{std}, \Lambda_R) \setminus (B^{2n}_{std}, \Lambda_Q)$. Since $(B^{2n}_{std}, \Lambda_P)$ is a subdomain of
		$(B^{2n}_{std}, \Lambda_Q)$, we have a Lagrangian cobordism in $\partial B^{2n}_{std}\times [0,1]$ with negative boundary  $\Lambda_P$ and positive boundary $\Lambda_Q$ by definition.

		If $0 \in P$, then $\Lambda_P$ is loose since it is the isotropic connected sum of $\Lambda_{P\backslash 0}$ and $\Lambda_0$, which we already saw to be loose. Let $Q$ be another set of primes. Then $\Lambda_{P}$ and $\Lambda_{P\cup Q}$ are both loose unknots (since $P \cup Q$ contains $0$) and so $\Lambda_{P}$ and $\Lambda_{P\cup Q}$ are Legendrian isotopic by the h-principle for loose Legendrians \cite{Murphy11}. By the previous discussion, this implies that $(B^{2n}_{std}, \Lambda_P) = (B^{2n}_{std}, \Lambda_{P \cup Q})$ is a subdomain of $(B^{2n}_{std}, \Lambda_Q)$ since now $Q \subset P \cup Q$.

		This proves all of claims (2), (3), (4), except the `only if' part of claim (2), (3).
		
		\subsubsection{Fukaya category of $(B^{2n}_{std}, \Lambda_P)$}
		Finally, we compute the partially wrapped Fukaya category of $(B^{2n}_{std}, \Lambda_P)$. By the description in Equation \ref{eqn: several_primes_carve_out}, $(B^{2n}_{std}, \Lambda_P)$ is the result of carving out the disks $D_{p_1}, \dotsc, D_{p_k}$ from $(B^{2n}_{std}, \Lambda_\varnothing) = (T^*D^n, \partial D^n)$ and then attaching some flexible handles; here the disks are embedded disjointly by viewing $(T^*D^n, \partial D^n)$ as the boundary connected sum of several disjoint copies of $(T^*D^n, \partial D^n)$.  
		By \cite{ganatra_generation, Sylvan_talks}, there is a geometrically defined Viterbo transfer functor 
		$$
		\Tw \mathcal{W}(T^*D^n, \partial D)  
		\rightarrow 
		\Tw \mathcal{W}((T^*D^n, \partial D) \backslash D_p)  
		$$
		which is localization by $D_p$. That is, 
		$
		\Tw \mathcal{W}((T^*D^n, \partial D) \backslash D_p)   \cong \Tw \mathcal{W}(T^*D^n, \partial D)/D_p
		$
		and the Viterbo functor is the algebraic localization by the object $D_p$.
		By construction, the Lagrangian  $D_p$ of $ \Tw \mathcal{W}(T^*D^n, \partial D^n; \mathbb{Z})$ is isomorphic to the twisted complex $$T^*_0 D^n[1]\overset{p}{\rightarrow} T^*_0 D^n = \mathrm{cone}(p \cdot \mathrm{Id}_{T^*_0 D^n}).$$ So 
		$
		\Tw \mathcal{W}((T^*D^n, \partial D) \backslash D_p)   \cong \Tw \mathcal{W}(T^*D^n, \partial D)/\mathrm{cone}(p \cdot \mathrm{Id}_{T^*_0 D^n})
		$
		Furthermore, the localization by a collection of objects depends only the split-closure of that collection of objects. Since $T^*D_0$ generates $\Tw \mathcal{W}(T^*D^n, \partial D^n)$,  we have the equivalence
		\begin{align*}\label{eqn: localization_isomorphism1}
		\Tw \mathcal{W}(T^*D^n &, \partial D)/\mathrm{cone}(p \cdot \mathrm{Id}_{T^*_0 D^n}) \\
		&\cong \Tw \mathcal{W}(T^*D^n, \partial D)/\{ \mathrm{cone}(p \cdot \mathrm{Id}_{L})| L \in \Tw \mathcal{W}(T^*D^n, \partial D^n) \}\\
		&=:\Tw \mathcal{W}(T^*D^n, \partial D; \mathbb{Z})\left[\frac{1}{p}\right]
		\end{align*}
		Combining with the previous equivalence, we have
		\begin{equation}
		\Tw \mathcal{W}((T^*D^n, \partial D)\backslash D_p; \mathbb{Z})
		\cong \Tw \mathcal{W}(T^*D^n, \partial D; \mathbb{Z})\left[\frac{1}{p}\right]
	     \end{equation}
		Similarly, when we carve out multiple disks $D_{p_1}, \dotsc, D_{p_k}$, we invert $p_1, \dotsc, p_k$ in the Fukaya category. 
		Attaching flexible handles does not affect the Fukaya category 
		and so 
		\begin{equation}\label{eqn: localization_isomorphism}
		\Tw \mathcal{W}(B^{2n}_{std}, \Lambda_P; \mathbb{Z})\cong 
		\Tw \mathcal{W}(T^*D^n, \partial D^n; \mathbb{Z})\left[\frac{1}{P}\right]
		\end{equation}
		as desired. If $p$ is zero, then $D_p = T^*_0 D^n$ and $\Tw \mathcal{W}(T^*D^n, \partial D)/ T^*_0 D^n \cong 0$, which is indeed the case for $(B^{2n}_{std}, \Lambda_0)$ since $\Lambda_0$ is loose. 
		
		\begin{remark}
			We note that the above discussion does not automatically show that that the equivalence in Equation \ref{eqn: localization_isomorphism} is given by the Viterbo functor induced by the Weinstein embedding of $(B^{2n}_{std}, \Lambda_P)$ into $(B^{2n}_{std}, \Lambda_\varnothing) = (T^*D^n, \partial D^n)$ due to the presence of the extra flexible handles. However this is indeed the case. Recall that the Weinstein cobordism between these two domains is 
			$H^{n-1} \cup H^n_{\Lambda'}$, which comes from a construction in \cite{Lazarev_crit_points, Lazarev_geometric_presentations}. The proof there shows that the co-core of $H^n_{\Lambda'}$ is $D_p \natural \overline{D_p} \subset (T^*D^n, \partial D^n)$ and so the Viterbo functor between these two domains is localization by $D_p \natural \overline{D_p}$. Now
			$D_p \natural \overline{D_p} \cong D_p \oplus D_p[1]$
			and $D_p$ have the same split-closure, and so  localization by $D_p \natural \overline{D_p}$ is the same as  localization by $D_p$, as in Equation \ref{eqn: localization_isomorphism}.
		\end{remark}

		Finally, we prove the `only if' part of claims (2), (3). Suppose that $(B^{2n}_{std}, \Lambda_P)$ is Weinstein subdomain of $(B^{2n}_{std}, \Lambda_Q)$ but $Q \not \subset P$ and $0 \not \in P$. There would be a localization functor from the Fukaya category of $(B^{2n}_{std}, \Lambda_Q)$ to that of 
		$(B^{2n}_{std}, \Lambda_P)$ over any coefficient ring $R$. However, if we take $R = \mathbb{F}_q$ for any $q \in Q \backslash P$, we have  $D_q \cong \mathrm{cone}(0_{T^*_0 D^n}) \cong T^*_0 D^n[1]\oplus  T^*_0 D^n$
		in $\Tw  \mathcal{W}(B^{2n}_{std}, \Lambda_\varnothing; \mathbb{F}_q)$ since $q \equiv 0$ in $\mathbb{F}_q$. 
		This object split-generates $\Tw  \mathcal{W}(B^{2n}_{std}, \Lambda_\varnothing; \mathbb{F}_q)$ and so 
		$$
		\Tw \mathcal{W}(B^{2n}_{std}, \Lambda_Q; \mathbb{F}_q) \cong \Tw \mathcal{W}(B^{2n}_{std}, \Lambda_\varnothing; \mathbb{F}_q)/D_q 
		\cong 0
		$$
		On the other hand, 
		all $p\in P$ are invertible in $\mathbb{F}_q$ because $q \in Q \backslash P$ by assumption and $p \ne 0$. Therefore 
		$D_p \cong \mathrm{cone}(p \cdot \mathrm{Id}_{T^*_0 D^n}) \cong 0$ in 
		$\Tw  \mathcal{W}(B^{2n}_{std}, \Lambda_\varnothing; \mathbb{F}_p)$ for all $p \in P$ and so 
		$$
		\Tw \mathcal{W}(B^{2n}_{std}, \Lambda_P; \mathbb{F}_q) \cong \Tw  \mathcal{W}(B^{2n}_{std}, \Lambda_\varnothing; \mathbb{F}_q)/0\cong \Tw  \mathcal{W}(B^{2n}_{std}, \Lambda_\varnothing; \mathbb{F}_q) \cong \Tw \mathbb{F}_q
		$$
		which is non-trivial. Since there cannot be a localization functor from the trivial category to $\Tw  \mathbb{F}_q$,  $(B^{2n}_{std}, \Lambda_P)$ cannot be a Weinstein subdomain of $(B^{2n}_{std}, \Lambda_Q)$. This proves the `only if' part of claim (2). If there is a smoothly trivial regular Lagrangian cobordism from $\Lambda_P$ to $\Lambda_Q$ in $\partial B^{2n}_{std} \times [0,1]$, then $(B^{2n}_{std}, \Lambda_P)$ is a Weinstein subdomain of $(B^{2n}_{std}, \Lambda_Q)$ and so the `only if' part of claim (3) follows from that for claim (2).
	\end{proof}
	
	Now we show that Theorem \ref{thm: p-handles} implies Theorem \ref{thm: exotic_subdomains} concerning Weinstein subdomains of an \textit{arbitrary} Weinstein domain.
	Recall that an index $n$ Weinstein handle can be viewed as the stopped domain $(T^*D^n, \partial D^n) = (B^{2n}_{std}, \Lambda_{\varnothing})$. We will consider the stopped domains $(B^{2n}_{std}, \Lambda_P)$ in Theorem \ref{thm: p-handles} as generalized Weinstein handles. 
	\begin{definition}
		A $P$-Weinstein handle of index $n$ is the stopped domain $(B^{2n}_{std}, \Lambda_P)$.
	\end{definition}
	Here our model for the $P$-Weinstein handle uses explicit embeddings of Moore spaces into $S^{n-1}$ and hence is well-defined.
	When attaching Weinstein handles, one implicitly uses the canonical parametrization of $\partial D^n \subset T^*D^n$. Via the construction in the proof of Theorem \ref{thm: p-handles}, this parametrization gives the Legendrians $\Lambda_P \subset \partial B^{2n}$ a parametrization as well. Therefore, given a parametrized Legendrian sphere $\Lambda$ in a contact manifold $(Y, \xi)$, we can attach a $P$-Weinstein handle $(B^{2n}_{std}, \Lambda_P)$ to it and produce a Weinstein cobordism, just like we do for usual Weinstein handles. 
	To prove Theorem \ref{thm: exotic_subdomains}, we replace all standard Weinstein $n$-handles $(B^{2n}_{std}, \Lambda_\varnothing)$ with Weinstein $P$-handles $(B^{2n}_{std}, \Lambda_P)$.  
	\begin{proof}[Proof of Theorem \ref{thm: exotic_subdomains}]
		Let $X^{2n}$ be a Weinstein domain with $n \ge 5$ and  $C_1^n, \dotsc, C_k^n \subset X^{2n}$ the Lagrangian co-core disks of its index $n$ handles $H^n_1, \dotsc, H^n_k$.
		Hence there is a subcritical Weinstein domain $X_0 \subset X$ and Legendrian spheres $\Lambda_1, \dotsc, \Lambda_k \subset \partial X_0$ so that $X = X_0 \cup H^n_{\Lambda_1} \cup \dotsb \cup H^n_{\Lambda_1}$ and the co-core of $H^n_{\Lambda_i}$ is $C_i \subset X$. That is, $X_0$ is obtained from $X$ by carving out the Lagrangian disks $C_1, \dotsc, C_k$. This gives the following decomposition of $X:$ 
		\begin{equation}\label{eqn: decomposition_weinstein_handles}
		X = (X_0, \Lambda_1, \dotsc, \Lambda_k) \cup_{\Lambda_1 = \Lambda_\varnothing} (B^{2n}_{std}, \Lambda_\varnothing)  \cup \dotsb \cup_{\Lambda_k = \Lambda_\varnothing }(B^{2n}_{std}, \Lambda_\varnothing)
		\end{equation}
		where the $i$th copy of $(B^{2n}_{std}, \Lambda_\varnothing)$ is glued to $X_0$ by identifying $\Lambda_\varnothing$ with $\Lambda_i$. Now we define $X_P$ to be the following Weinstein domain:
		\begin{equation}\label{eqn: decomposition_weinstein_P_handles}
		X_P:= (X_0, \Lambda_1, \dotsc, \Lambda_k) \cup_{\Lambda_1 = \Lambda_P} (B^{2n}_{std}, \Lambda_P) \cup \dotsb \cup_{\Lambda_k = \Lambda_P }(B^{2n}_{std}, \Lambda_P)
		\end{equation}
		Namely, we replace each standard Weinstein $n$-handle $(B^{2n}_{std}, \Lambda_\varnothing)$ by a $P$-Weinstein handle
		$(B^{2n}_{std}, \Lambda_P)$.
		
		\begin{remark}\label{rem: p-handles_isotropicsum}
			We note that attaching $P$-Weinstein handles $(B^{2n}_{std}, \Lambda_P)$ to $(X_0, \Lambda_1, \dotsc, \Lambda_k)$ is the same as attaching standard Weinstein handles $(B^{2n}_{std}, \Lambda_\varnothing)$ to $X_0$ with some modified attaching Legendrian $\Lambda_i^P \subset \partial X_0$. In fact, $\Lambda_i^P$  is the isotropic connected sum  $\Lambda_i \natural \Lambda_P$ of $\Lambda_i \subset \partial X_0$ and $\Lambda_P \subset \partial B^{2n}_{std}$, which we place into a Darboux chart in $\partial X_0$ disjoint from $\Lambda_i$.
			To see this, note that gluing $(B^{2n}_{std}, \Lambda_P)$ to $(X_0, \Lambda_i)$ by identifying $\Lambda_P$ with $\Lambda_i \subset \partial X_0$ is the same as gluing a cylinder $T^*(S^{n-1} \times D^1)$ to $(X_0, \Lambda_i) \amalg (B^{2n}_{std}, \Lambda_P)$ by identifying $S^{n-1}\times 0$ with $\Lambda_i$ and $S^{n-1}\times 1$ with $\Lambda_P$. The cylinder can be decomposed into a standard Weinstein index $1$ handle and a standard Weinstein index $n$ handle.
			So we first do simultaneous index $1$ handle attachment to $(X_0, \Lambda_i)$ and $(B^{2n}_{std}, \Lambda_P)$, with attaching sphere a point in $\Lambda_i$ and a point in $\Lambda_P$, to produce $(X_0 \natural B^{2n}_{std},  \Lambda_i \natural \Lambda_P)$. 
			If we identify $X_0 \natural B^{2n}$ with $X_0$, then $\Lambda_P$ becomes a Legendrian in $\partial X_0$ (in a Darboux chart disjoint from $\Lambda_i$) and $\Lambda_i\natural \Lambda_P$ is precisely 
			the isotropic connected sum of  $\Lambda_i$ and $\Lambda_P$ in $\partial X_0$. 
			Then we attach the (standard) index $n$ Weinstein handle of the cylinder $T^*(S^{n-1}\times D^1)$ along $\Lambda_i \natural \Lambda_P$. Thus, the decomposition of $X_P$ in Equation \ref{eqn: decomposition_weinstein_P_handles} can alternatively be described as
			\begin{equation}\label{eqn: decomposition_weinstein_handles_isotropic_sum}
			(X_0, \Lambda_1\natural \Lambda_P, \dotsc,  \Lambda_k\natural \Lambda_P) \cup_{\Lambda_1\natural \Lambda_P = \Lambda_\varnothing} (B^{2n}_{std}, \Lambda_\varnothing) \cup \dotsb \cup_{\Lambda_k \natural \Lambda_P= \Lambda_\varnothing }(B^{2n}_{std}, \Lambda_\varnothing)
			\end{equation}
			In particular, the attaching spheres for the (standard) index $n$ handles for $X$ and $X_P$ differ by a purely local modification, namely an isotropic connected sum with $\Lambda_P$.
		\end{remark}
		
		Now Claims 1), 2), 3) in Theorem \ref{thm: exotic_subdomains} follow from the analogous claims in Theorem \ref{thm: p-handles}. 
		For example, $X_\varnothing = X$ since $(B^{2n}_{std}, \Lambda_{\varnothing})$ is the standard Weinstein handle $(T^*D^n, \partial D^n)$. Also,  since $(B^{2n}_{std}, \Lambda_P)$ is a Weinstein subdomain of $(B^{2n}_{std}, \Lambda_Q)$ for $Q \subset P$,  $X_P$ is a Weinstein subdomain of $X_Q$ and this Weinstein embedding is also functorial with respect to inclusions of various subsets of primes. 
		If $0 \in P$, then $X_P$ is flexible. To see this, recall that $\Lambda_P \subset \partial B^{2n}_{std}$ is loose by Theorem \ref{thm: p-handles}; this implies that the attaching spheres $\Lambda_i^P \subset \partial X_0$ for $X_P$ are also loose since by Remark \ref{rem: p-handles_isotropicsum}, $\Lambda_i^P$ is the isotropic connected sum of $\Lambda_i$ with $\Lambda_P$, which is a loose Legendrian loosely embedded in a Darboux chart disjoint from $\Lambda_i$. If $0 \in Q \subset P$, then the cobordism between $X_P$ and $X_Q$ is flexible since the cobordism between $(B^{2n}_{std}, \Lambda_P)$ and $(B^{2n}_{std}, \Lambda_Q)$ is also flexible (in the complement of $\Lambda_P$).

		Finally, we compute $\Tw \mathcal{W}(X_P; \mathbb{Z})$. Since $X_P$ is a Weinstein subdomain of $X$, there is a Viterbo transfer functor:
		$$
		\Tw \mathcal{W}(X; \mathbb{Z}) \rightarrow \Tw \mathcal{W}(X_P; \mathbb{Z})
		$$
		As in the proof of Theorem \ref{thm: p-handles}, this functor is localization by $D_p \subset (T^*D^n, \partial D^n)$ (or equivalently by $D_p \natural \overline{D_p}$) and 
		$D_p \cong \mathrm{cone}(p \cdot \mathrm{Id}_{T^*_0 D^n})$. 
		On the other hand,  $T^*_0 D^n \subset (T^*D^n, \partial D^n) = (B^{2n}_{std}, \Lambda_\varnothing)$ is precisely the co-core $C_i^n$ of $H^n_{\Lambda_i}$ under the decomposition of $X$ in Equation \ref{eqn: decomposition_weinstein_handles} and so $D_p$ is isomorphic to $\mathrm{cone}(p \cdot \mathrm{Id}_{C_i^n})$.  
		By \cite{ganatra_generation, chantraine_cocores_generate}, the co-cores $C_i^n$ of all the $H^n_{\Lambda_i}$ generate $\Tw \mathcal{W}(X)$. So localizing by $\mathrm{cone}(p \cdot \mathrm{Id}_{C_i^n})$ for all $i$ is the same as localizing by 
		$\mathrm{cone}(p \cdot \mathrm{Id}_{L})$ for all $L \in \Tw  \mathcal{W}(X; \mathbb{Z})$. That is, 
		$\Tw \mathcal{W}(X_P; \mathbb{Z})\cong 
		\Tw \mathcal{W}(X; \mathbb{Z})[\frac{1}{P}]$ as desired. 
	\end{proof}
	
	We observe that our construction of $X_P$ depends on many choices. For example, it depends on the choice of initial Weinstein presentation for $X$. There are Weinstein homotopic presentations for $X$ with different numbers of index $n$ handles; hence in this case, our construction would involve carving out different numbers of Lagrangian disks (and then attaching the appropriate flexible cobordism). There are also choices to be made in constructing 
	the $P$-handles $(B^{2n}_{std}, \Lambda_P)$.  We fixed a $p$-Moore space $U \subset S^{n-1}$ so that $\tilde{C}^*(U) = 	\mathbb{Z}[-2] \overset{p}{\rightarrow} \mathbb{Z}[-3]$
	and used this to construct $D_p: = D_U$ and then form $(B^{2n}_{std}, \Lambda_P)$. In fact, we could have taken any $U \subset S^{n-1}$ so that $\tilde{C}^*(U)$ is quasi-isomorphic to  $\bigoplus_i (\mathbb{Z}[k_i+1] \overset{p}{\rightarrow} \mathbb{Z}[k_i])$ for any $k_i$. Repeating the construction for such $U$, we would also have 
	$\Tw \mathcal{W}(B^{2n}_{std}, \Lambda_P; \mathbb{Z}) \cong \Tw \mathcal{W}(B^{2n}_{std}, \Lambda_\varnothing; \mathbb{Z})[\frac{1}{P}]$ as well. 
	
	Now that we have described the subdomains $X_P$ of $X$, we can explain the difference between our construction and that of Abouzaid and Seidel \cite{Abouzaid_Seidel} more precisely. Abouzaid and Seidel \cite{Abouzaid_Seidel} starts with a Lefschetz fibration for $X^{2n}$ whose fiber is a Weinstein domain $F^{2n-2}$. They then embed the Lagrangian disks $D_p^{n-1}$ into $F^{2n-2}$ so that they are 	in a neighborhood of the co-cores $C_i^{n-1}$ of the critical index $n-1$ handles $H^{n-1}_i$ of $F^{2n-2}$;  using these disks, they build a larger fiber $F'$ (which has $F$ as a Weinstein subdomain) and add new vanishing cycles to create a new Lefschetz fibration, which is their space $X_P'$. On the other hand, the construction in Theorem \ref{thm: exotic_subdomains} embeds the disks $D_p^n$ into the \textit{total space} $X^{2n}$ so that they are in a neighborhood of the co-cores $C_i^n$ of the critical index $n$ handles $H^n_i$ of  $X^{2n}$; we then carve out these disks. The construction of Abouzaid-Seidel holds only for 
	$n \ge 6$. Because we work near the 	index $n$ handles instead of the index $n-1$ handles,  our construction improves this to hold for $n \ge 5$.

	Next we complete the proof of Corollary \ref{cor: exotic_subdomains_cotangent} concerning subdomains of $T^*M_{std}$.
	\begin{proof}[Proof of Corollary \ref{cor: exotic_subdomains_cotangent}]
		The only extra feature of this result over Theorem \ref{thm: exotic_subdomains} is the `only if' part of the statement:
		$T^*M_P \subset T^*M_Q$ if and only if $Q \subset P$ or $0 \in P$. 
		To prove this, we repeat the proof in Theorem \ref{thm: p-handles} 
		that $(B^{2n}_{std}, \Lambda_P)$ is a subdomain of $(B^{2n}_{std}, \Lambda_Q)$ if and only if $Q \subset P$.
		Namely, suppose that $T^*M_P \subset T^*M_Q$ is a Weinstein subdomain but $Q\not \subset P$ and $0 \not \in P$. Then there is a Viterbo localization functor on Fukaya categories over $\mathbb{F}_q$ for $q \in Q \backslash P$. However,
		$\Tw \mathcal{W}(T^*S^n_Q; \mathbb{F}_q) \cong 0$ but $\Tw \mathcal{W}(T^*S^n_P; \mathbb{F}_q) \cong \Tw \mathcal{W}(T^*S^n; \mathbb{F}_q) \cong \Tw C_*(\Omega S^n; \mathbb{F}_q)$ is non-trivial and so there cannot be such a localization functor. 
	\end{proof}
	\begin{remark}
		A similar argument using the fact that the Viterbo map on symplectic cohomology is a unital ring map shows that that $T^*S^n_P$ cannot be a \textit{Liouville} subdomain of $T^*S^n_Q$ if $Q \not \subset P$ and $0 \not \in P$.
	\end{remark}
	
	\subsubsection{Exotic presentations}
	We now briefly explain the connection between the subdomains of $T^*S^n_{std}$ constructed in Corollary \ref{cor: exotic_subdomains_cotangent} and certain `exotic' Weinstein presentations 
	of $T^*S^n_{std}$ studied by the first author in \cite{Lazarev_geometric_presentations}; the reader can safely skip this section without interrupting the flow of this paper.

There are many different Legendrian spheres $\Lambda_k \subset \partial B^{2n}_{std}$ so that $B^{2n}_{std}\cup H^n_{\Lambda_k}$ is Weinstein homotopic to the standard presentation $B^{2n}_{std}\cup H^n_{\Lambda_\varnothing}$; we call this an exotic presentation since $\Lambda_k$ is different from $\Lambda_\varnothing$. Under the resulting identification, the co-core of $H^n_{\Lambda_k}$ is $\natural^k_{i=1} T^*_{x_i} S^n \natural^{k-1}_{j=1} \overline{T^*_{y_j} S^n}$, the boundary connected sum of several copies of the cotangent fiber $T^*_q S^n$, possibly with the opposite orientation. 
	
Recall that for any $U \subset S^{n-1}$, we consider a Lagrangian disk $D_U \subset (T^*D^n, \partial D^n)$ with $D_U \cong \tilde{C}^{*-1}(U) \otimes T^*_0 D^n$ and  in the proof of Theorem \ref{thm: p-handles}, we observed that $T^*D^n\backslash D_U$ is the subcritical domain $T^*(S^{n-1}\times D^1)$. There are two disjoint Legendrian spheres $\Lambda_{1,U}, \Lambda_{2,U} \subset \partial(T^*(S^{n-1}\times D^1))$; here $\Lambda_{1,U}$ is the image of the original Legendrian stop $\partial D^n \subset \partial T^*D^n$ and $\Lambda_{2,U}$ is the Legendrian obtained by carving out $D_U$, i.e. the co-core of a handle attached along $\Lambda_{2,U}$ is $D_U$ (this Legendrian is called $\Lambda$ in the proof of Theorem \ref{thm: p-handles}). Starting from the standard presentation $ (B^{2n}_{std}, \Lambda_\varnothing) \cup_{\Lambda_\varnothing = \Lambda_\varnothing}  (B^{2n}_{std}, \Lambda_\varnothing)$ of $T^*S^n_{std}$ and taking $U$ to be a $p$-Moore space, the construction in Theorem \ref{thm: exotic_subdomains} produces the Weinstein subdomain $T^*S^n_p \subset T^*S^n_{std}$ as
	$$
	T^*S^n_p := (B^{2n}_{std}, \Lambda_\varnothing) \cup_{\Lambda_\varnothing  = \Lambda_{1,U}} (T^*(S^{n-1}\times D^1)), \Lambda_{1,U}, \Lambda_{2,U, loose}) \cup_{\Lambda_{2,U,loose} = \Lambda_\varnothing} (B^{2n}_{std}, \Lambda_\varnothing)
	$$
where $\Lambda_{2,U,loose}$ is a loose version of $\Lambda_{2,U}$. That is, $T^*S^n_p$ is obtained by attaching a flexible handle to the (unstopped) Weinstein domain 
	$$
	(B^{2n}_{std}, \Lambda_\varnothing) \cup_{\Lambda_\varnothing  = \Lambda_{1,U}} (T^*(S^{n-1}\times D^1)), \Lambda_{1,U})
	$$
where $U$ is a $p$-Moore space. 
	
Now, if $U$ is a neighborhood of $\coprod^{k}_{i=1} B^{n-1} \amalg \vee^{k-1}_{j=1} S^1 \subset S^{n-1}$, the disjoint union of $k$ balls $B^{n-1}$ and the wedge sum of $k-1$ copies of $S^1$, then $D_U$ is Lagrangian isotopic in $(T^*D^n, \partial D^n)$ to $\natural^k_{i=1} T^*_{x_i} D^n \natural^{k-1}_{j=1} \overline{T^*_{y_j} D^n}$. Furthermore, $\Lambda_{1,U} \subset \partial (T^*S^{n-1}\times D^1)$ is loose. This is because the subdomain obtained by carving out the disjoint union $\coprod^k_{i=1} T^*_{x_i} D^n \coprod^{k-1}_{j=1} \overline{T^*_{y_j} D^n}$ and the subdomain obtained by carving out the boundary connected sum $\natural^k_{i=1} T^*_{x_i} D^n \natural^{k-1}_{j=1} \overline{T^*_{y_j} D^n}$ are related by flexible cobordism (see  \cite{Lazarev_geometric_presentations} for the proof). In particular, the attaching spheres for the cobordism are loose in the complement of the stop. Since the former domain has a loose stop, so does the latter by \cite{CE12}.
Hence
$$
	(B^{2n}_{std}, \Lambda_\varnothing) \cup_{\Lambda_\varnothing  = \Lambda_{1,U}} (T^*(S^{n-1}\times D^1)), \Lambda_{1,U})
	$$
is a flexible Weinstein domain $X$, with no stop. In fact, $X$ is the standard Weinstein ball because 
$
[\natural^k_{i=1} T^*_{x_i} D^n \natural^{k-1}_{j=1} \overline{T^*_{y_j} D^n}] = [T^*_x D^n] \in H^n(T^*D^n; \mathbb{Z})$, which implies that it has trivial homology 
and hence is a smooth ball by the h-cobordism theorem. 
In conclusion,  the stopped domain
	$$
	(B^{2n}_{std}, \Lambda_\varnothing) \cup_{\Lambda_\varnothing  = \Lambda_{1,U}} (T^*(S^{n-1}\times D^1)), \Lambda_{1,U},  \Lambda_{2,U})
	$$ 
is precisely $(B^{2n}_{std}, \Lambda_k)$, where $\Lambda_k$ is the Legendrian from \cite{Lazarev_geometric_presentations}, since by construction the co-core of a handle attached along $\Lambda_{2,U}$ is $D_U= \natural^k_{i=1} T^*_{x_i} D^n \natural^{k-1}_{j=1} \overline{T^*_{y_j} D^n}$.

We end with a discussion of which ingredients were necessessary in the construction of these exotic presentations. First and foremost, we need to realize $ \natural_{i=1}^k T^*_{x_i} D^n \natural_{j=1}^{k-1} \overline{T^*_{y_j} D^n}$ as $D_U$ and hence embed  $S^1$ into $S^{n-1}$ as a proper subset. This requires $n \ge 3$ and indeed \cite{Lazarev_geometric_presentations} proves there are no such exotic presentations for $n = 2$. Interestingly, these exotic presentations fail to exist for the same reason that the existence h-principle for Legendrians fails when $n = 2$. Indeed, consider a proper subdomain $U \subset B^{n-1} \subset S^{n-1}$ and a Legendrian $\Lambda^{n-1} \subset \partial B^{2n}_{std}$. Then one can form the Legendrian $U$-stabilization $St_U(\Lambda) \subset \partial B^{2n}_{std}$ of  $\Lambda$ (see \cite{CE12, Murphy11}) whose Thurston-Bennequin invariant differs from that of $\Lambda$ by the Euler characteristic $\chi(U)$. If $n \ge 3$, any integer can be realized as the Euler characteristic of $U$ by taking $U \subset B^{n-1}$ to be a neighborhood of $\coprod^j B^{n-1} \amalg \vee^k S^1 \subset S^{n-1}$, the space we previously considered. This allows one to realize all formal Legendrian embeddings by actual Legendrians for $n \ge 3$. However, if $n = 2$, all proper subdomains $U \subset S^1$ have $\chi(U) > 0$. Indeed the Bennequin inequality proves that the Thurston-Bennequin invariant of any smoothly trivial Legendrian in $\partial B^{4}_{std}$ is at most $-1$; so the existence h-principle fails for $n =2$. We expect that there is precise connection between the $U$-stabilized Legendrians $St_U(\Lambda)$ from \cite{CE12, Murphy11} and the Lagrangian disks $D_U$ here, since both construction involving pushing a smooth subdomain $U$ in one Legendrian through another.
	
\section{Classifying Lagrangian disks}\label{sec:class_lag_disks}

In this section, we prove Theorem \ref{thm: Lagrangian_homology_twisted}: if $M$ is simply connected and spin, and $i\colon L \hookrightarrow T^*M$ is null-homotopic, then $L \cong CW^*(M, L) \otimes T^*_q M^n$ in $\mathrm{Tw}\, \mathcal{W}(T^*M; \mathbb{Z})$.
To accomplish this, we will apply Koszul duality to characterize objects of $\mathrm{Tw}\,\mathcal W(T^*M;\mathbb Z)$ as modules over the $A_\infty$-algebra $CW^*(M,M)\cong C^*(M)$.
Here it is  crucial that we work with the $\mathbb{Z}$-graded wrapped Fukaya category, where the $\mathbb{Z}$-grading comes from  the Lagrangian fibration by cotangent fibers. Any Lagrangian disk, since it is contractible, can be $\mathbb{Z}$-graded; the zero-section $M \subset T^*M_{std}$ can also be $\mathbb{Z}$-graded for
this grading. Hence these Lagrangians define objects of the $\mathbb{Z}$-graded Fukaya category.

\subsection{$C^*(X)$-modules}\label{sec:C*_mod}
We begin with a general discussion of how to view Floer complexes as modules over Morse cochain algebras. The outcome is Proposition \ref{prop: C* modules homotopic}, which says that the module structures are unexpectedly topological. This is what will allow us to draw Floer theoretic conclusions from the topological assumption of null-homotopy.

For now, we will work in a general Liouville domain $X$. Given two Lagrangian branes $K,L\subset X$, possibly equipped with rank $1$ local systems, we can endow $CW^*(K,L)$ with the structure of a right $C^*(X)$-module in a number of ways. In each case, we model the $A_\infty$ structure on our cochain algebras $C^*(X)$, $C^*(K)$, and $C^*(L)$ with Morse complexes and perturbed gradient flow trees \cite{Abouzaid2011a} associated to exhausting Morse functions $f_X$, $f_K$, and $f_L$.

Let us fix some notation. The moduli space of domains controlling the $A_\infty$ operations is the space
\[
\begin{tikzcd}\mathcal{T}^{d+1}\ar[d]\\\mathcal{R}^{d+1}\end{tikzcd}
\]
of metric ribbon trees with $d+1$ infinite leaves and no finite leaves, labeled $x_0,\dotsc,x_d$ in counterclockwise order. More explicitly, a point $p\in\mathcal R^{d+1}$ is an isomorphism class $[T_p]$, where $T_p$ is a noncompact tree with
\begin{itemize}
	\item $d+1$ ends and no mono- or bivalent vertices,
	\item a ribbon structure, which for a tree is the same as a homotopy class of planar embeddings,
	\item an edge metric, meaning that we can measure the distance between any two points of $T_p$ (not necessarily vertices), and
	\item a labeling of the ends by $x_0,\dotsc,x_d$ in counterclockwise order with respect to the ribbon structure.
\end{itemize}
The fibration $\mathcal T^{d+1}\to\mathcal R^{d+1}$ is the tautological one, which over each $p$ is a representative $T_p$. In what follows, we will imagine $x_0$ as the bottom of $T_p$ and the other $x_i$ as the top, which will allow us to use the prepositions ``below'' or ``above'' to mean ``closer to $x_0$'' or ``closer to some other $x_i$'', respectively.

The restriction homomorphisms $i_K^* \colon C^*(X)\to C^*(K)$ and $i_L^* \colon C^*(X)\to C^*(L)$ are controlled by the space
\[
\begin{tikzcd}\mathcal{G}^{d+1}\ar[d]\\\mathcal{S}^{d+1}\end{tikzcd}
\]
of \emph{grafted trees}, which are metric ribbon trees $T$ as above with the additional data of a (necessarily finite) subset $D\subset T$ which separates $x_0$ from the other leaves and whose elements are equidistant from $x_0$. For $d\ge2$, $\mathcal S^{d+1}$ has a natural $\mathbb R$-action which translates $D$, and the quotient is canonically identified with $\mathcal R^{d+1}$ (for $d=1$, $\mathcal S^{1+1}$ is a single point). However, the natural compactification $\overline{\mathcal R}^{d+1}$ models the associahedron, while $\overline{\mathcal S}^{d+1}$ models the multiplihedron. The restriction homomorphism
\[
\{F^d\,\vert\,d=1,\dotsc,\infty\}\colon C^*(X)\to[C^*(K)\text{ or }C^*(L)]
\]
is then given by counting isolated perturbed gradient flow trees of shape $T_q$ for some $q\in\mathcal S^{d+1}$, where the portion of $T_q$ above (resp. below) $D$ maps into $X$ (resp. $K$ or $L$). Note that, 
because we work with a \emph{perturbed} gradient flow, we do not need to require $f_X$ to restrict to $f_K$ or $f_L$. Of course, if we wanted to we could arrange that $f_X$ restricts to one of these Morse functions, but generally it would impossible to achieve both. Fortunately, all the resulting homomorphisms are homotopic.

To make Floer complexes into $C^*(X)$-modules, we need chain-level PSS-type structures, which are built from \emph{short trees} or \emph{short grafted trees}. A short tree with $d$ inputs is a rooted metric ribbon tree with $d$ infinite leaves and no finite leaves (except possibly the root). The root is labeled $y$, while the leaves are labeled $x_1,\dotsc,x_d$ in counterclockwise order. A short grafted tree is a short tree equipped with the additional data of a dividing set $D$ as above either separating $y$ from the $x_i$ or equal to $\{y\}$. We will denote the spaces of short trees and short grafted trees by $\mathcal R^{d+1}_s$ and $\mathcal S^{d+1}_s$, respectively. There are canonical piecewise smooth homeomorphisms
\begin{equation}\label{eq:short_tree_descr}
\mathcal R^{d+1}_s\cong\mathcal R^{d+1}\times\R_{\ge0}
\end{equation}
for $d\ge2$
and
\begin{equation}\label{eq:short_grafted_descr}
\mathcal S^{d+1}_s\cong\mathcal R^{d+1}_s\times\R_{\ge0}
\end{equation}
for $d\ge1$, i.e. all $d$. In \eqref{eq:short_tree_descr}, the $\mathbb R_{\ge0}$ factor measures the distance between the root $y$ and the first vertex, while in \eqref{eq:short_grafted_descr} it measures the distance between $y$ and the dividing set.

The PSS-type structures in question all come from moduli spaces of strips with some number of short Morse trees attached at marked points.

\begin{definition}\label{def: hedge}
	A \emph{hedge} comprises
	\begin{enumerate}
		\item a smooth function $f\colon\R\to[0,1]$,
		\item a collection of $k$ points $z_1,\dotsc,z_k$ on the graph $\Gamma(f)\subset\R\times[0,1]$ with strictly increasing $\R$ components, and
		\item for each $z_i$, a short tree $T_i$.
	\end{enumerate}
	Identifying $z_i$ with the root $y_i$ of the tree $T_i$ induces a total lexicographic order of the leaves $x_{ij}$ of the trees $T_i$, namely $x_{i,j}<x_{i'j'}$ if either $i<i'$ or both $i=i'$ and $j<j'$.
\end{definition}

Fix a number $c\in(0,1)$. The space $\mathcal H^d_c$ of hedges with $d$ leaves $x_{ij}$ and $f(s)=c$ comes a priori as a disjoint union of components indexed by partitions of the leaves into trees $T_i$. However, there is a natural way to glue the various components to build a connected moduli space. To see this, note that the boundary strata (before compactification) come from one or more roots $y_i$ becoming multivalent, or in horticultural terms from some tree $T_i$ becoming maximally short. Such configurations can also be achieved by having multiple smaller short trees attached to distinct marked points collide. The result is that we can make $\mathcal H^d_c$ into a \emph{connected, smoothly stratified, topological manifold without boundary}, see Figure \ref{fig: moduli}. This is good enough to construct operations in Floer theory.

$\mathcal H^d_c$ has a natural compactification $\overline{\mathcal H}^d$, where the codimension $1$ boundary strata come in two types. The first is associated with Morse breaking, where a single short tree will break into a short tree and a (long) tree. The second is a type of Floer breaking associated with the marked points $z_i$ moving apart, so that the limiting configuration is made up of two hedges.

	\begin{figure}
		\centering
		\includegraphics[scale=0.15]{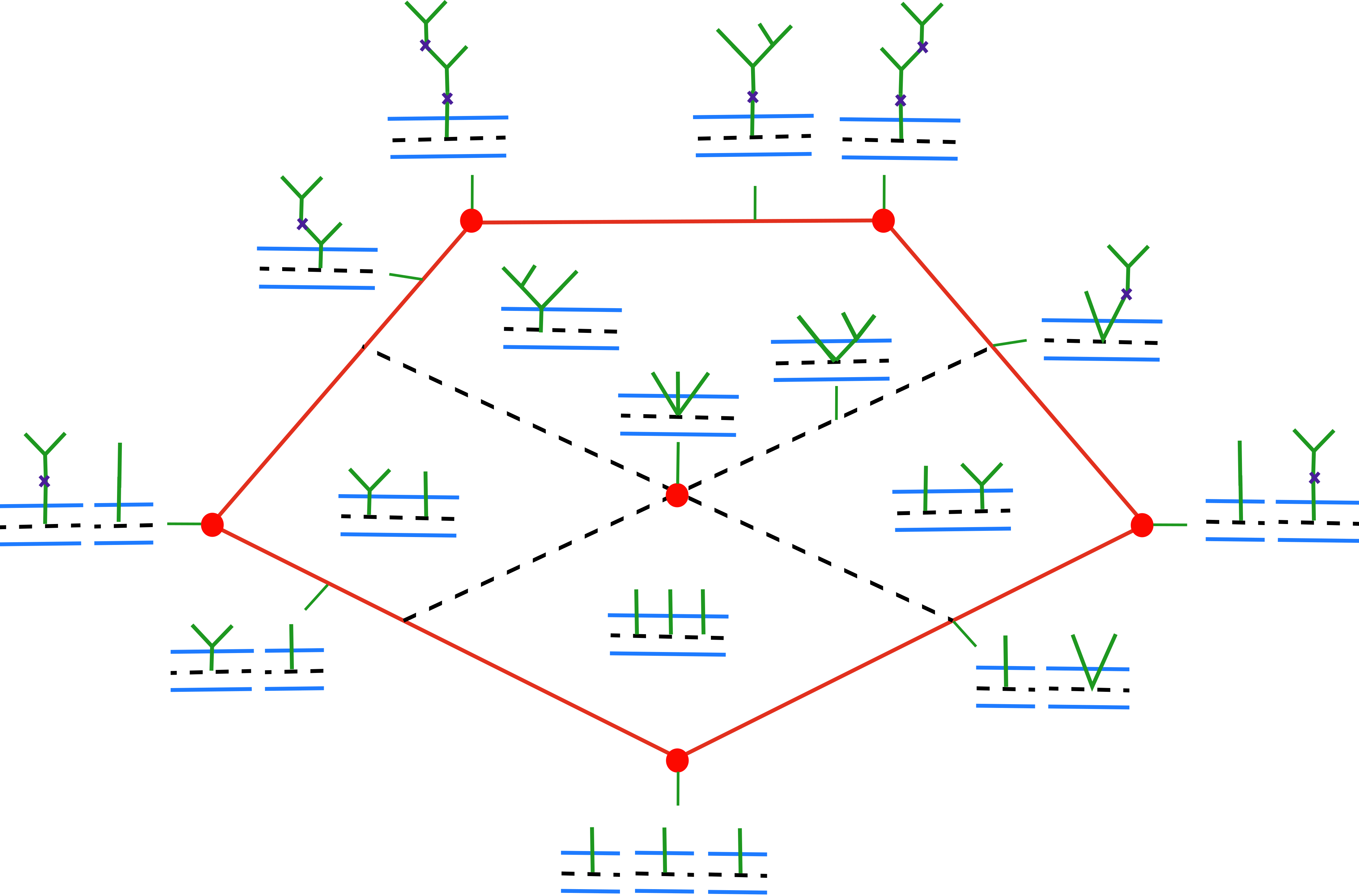}
		\caption{The space $\overline{\mathcal H}^3_c$; the boundary consists of Floer breaking (lower boundary in the diagram) and Morse breaking (upper boundary).
		}
		\label{fig: moduli}
	\end{figure}

An $X$-valued perturbation datum for a hedge $\mathbf H$ amounts to a perturbation datum for each short tree $T_i$, which is just an $\epsilon$-parametrized family of vector fields on $X$ for each edge $\epsilon$ of $T_i$ which vanishes outside a compact subset of $\epsilon$. Given a Morse-Smale pair on $X$ and a Floer datum for the pair $(K,L)$, we can define a hedge map out of $\mathbf H$ to be a tuple $(u,\tau_1,\dotsc,\tau_k)$, where
\begin{itemize}
	\item $u$ is a Floer trajectory with boundary on $(K,L)$,
	\item $\tau_i$ is a perturbed gradient flow tree in $X$ parametrized by $T_i$, and
	\item $\tau_i(y_i)=u(z_i)$.
\end{itemize}
If $\mathbf H\in\mathcal H^d_0$, we can analogously define a $K$-valued perturbation datum for $\mathbf H$ to be a family of vector fields on $K$, and a hedge map to involve gradient flow trees in $K$; if $\mathbf H\in\mathcal H^d_1$, we can do the same with $L$.

For generic Morse-Smale pairs, smooth translation-invariant families of perturbation data on $\mathcal H^d_c$, and Floer data on $X$, the spaces of $d$-leaved hedge maps are smoothly stratified topological manifolds of the expected dimension. Counting such maps which are isolated up to translation makes $CW^*(K,L)$ into a right $C^*(X)$-module, which we'll denote $CW^*(K,L)_{X,c}$. Similarly, when $c=0$ or $1$, we can make $CW^*(K,L)$ into a \emph{right} $C^*(K)$- or $C^*(L)$-module $CW^*(K,L)_{K,0}$ or $CW^*(K,L)_{L,1}$, respectively.

The key holomorphic curve ingredient of our story is that these modules are all homotopic (and therefore quasi-isomorphic) when pulled back to $C^*(X)$:

\begin{proposition}\label{prop: C* modules homotopic}
	For $c_1, c_2\in[0,1]$, there is a homotopy
	\begin{equation}\label{eqn: module htopy easy}
	CW^*(K,L)_{X,c_1} \simeq CW^*(K,L)_{X,c_2}
	\end{equation}
	of right $C^*(X;\mathbb Z)$-modules.
	
	Similarly, for any $c$, there are homotopies
	\begin{align}
	CW^*(K,L)_{X,c} &\simeq i_K^\vee CW^*(K,L)_{K,0}
	\label{eqn: module htopies 1}\\
	CW^*(K,L)_{X,c} &\simeq i_L^\vee CW^*(K,L)_{L,1}
	\label{eqn: module htopies 2},
	\end{align}
	where
	\[
	i_K^\vee \colon \mathrm{Mod}_{C^*(K;\mathbb Z)} \to \mathrm{Mod}_{C^*(X;\mathbb Z)}
	\]
    is the pullback functor under the restriction homomorphism of cochains 
	$i_K^*: C^*(X) \rightarrow C^*(K)$, and similarly for $i_L^\vee$.
\end{proposition}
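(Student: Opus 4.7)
All three homotopies will be constructed by the standard device of enlarging the hedge moduli spaces of Definition \ref{def: hedge} with one extra parameter, counting rigid maps in the total space, and reading off the module homotopy from the codimension-$1$ boundary.

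For \eqref{eqn: module htopy easy}, the plan is to pick a path $c(\rho)$ from $c_1$ to $c_2$ in $[0,1]$ and form the $1$-parameter family $\bigsqcup_{\rho\in[0,1]}\mathcal H^d_{c(\rho)}$. Counts of rigid hedge maps over this family define multilinear maps $\eta^{d|1}\colon C^*(X)^{\otimes d}\otimes CW^*(K,L)\to CW^*(K,L)$ of degree $-1$. The codimension-$1$ boundary of the family decomposes into the endpoint fibers $\rho\in\{0,1\}$ (recovering $\mu^{d|1}_{X,c_1}$ and $\mu^{d|1}_{X,c_2}$), Floer breaking of the strip (producing $d_{CW}\circ\eta+\eta\circ d_{CW}$-type terms), and Morse breaking of the short trees (producing $\eta\circ(\mu_{C^*(X)}\otimes\mathrm{id})$ and $\mu^{\ast|1}_{X,c(\rho)}\circ(\mathrm{id}\otimes\eta)$-type terms). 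Summing these contributions yields exactly the $A_\infty$-module homotopy equation.

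For \eqref{eqn: module htopies 1}, the plan is to replace each short tree in our hedges with a short \emph{grafted} tree, and to let a new grafting parameter $r\in[0,\infty]$ range freely. Concretely, each short tree $T_i$ acquires a dividing set $D_i$ at distance $r_i$ from the root $y_i$; the portion above $D_i$ maps into $X$ as before, while the portion below $D_i$ maps into $K$ and is attached to the strip at $y_i\in\partial K\times\R$. When all $r_i=0$, the dividing sets collapse onto the roots and we recover a hedge with purely $X$-valued short trees at height $c$, contributing $CW^*(K,L)_{X,c}$; as the $r_i\to\infty$, the broken configurations consist of a short $K$-tree attached to the strip at $c=0$ with several $X$-subtrees grafted onto its leaves, and summing over these decompositions reproduces exactly the pullback module operations $\mu^{d|1}_{i_K^\vee CW^*(K,L)_{K,0}}$ expressed via the higher $A_\infty$-restriction components $F^s\colon C^*(X)^{\otimes s}\to C^*(K)$ and the $C^*(K)$-module structure on $CW^*(K,L)_{K,0}$. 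Counts of rigid maps over this enlarged family, with the codimension-$1$ boundary analysis, then produce the desired $A_\infty$-module homotopy; \eqref{eqn: module htopies 2} is entirely parallel with $L$ replacing $K$ and the opposite boundary component of the strip.

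The main technical obstacle is setting up the enlarged moduli spaces as connected, smoothly stratified, topological manifolds with boundary, generalizing the gluing construction described after Definition \ref{def: hedge}. The delicate point is verifying that the codimension-$1$ strata at $r_i=0$, at $r_i=\infty$, at coalescences of dividing sets with roots, and at collisions between distinct short grafted trees match \emph{term-by-term} with the algebraic terms of the $A_\infty$-module homotopy equation — and in particular that the combinatorics of the $r_i\to\infty$ limit recover the full pullback formula, including all higher components $F^s$. Modulo the standard transversality arguments for perturbed gradient flow trees and Floer strips, this combinatorial boundary identification is the heart of the proof and is what underpins the surprisingly topological nature of the resulting $C^*(X)$-module structure on $CW^*(K,L)$.
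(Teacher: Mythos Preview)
Your approach is correct in outline and close to the paper's, but there are two points of divergence worth noting.

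For \eqref{eqn: module htopy easy}, the paper does \emph{not} use an external parameter $\rho$. Instead it picks a single smooth function $f\colon\R\to[0,1]$ with $f(s)=c_1$ near $+\infty$ and $f(s)=c_2$ near $-\infty$, and counts hedge maps whose marked points lie on the graph of this non-constant $f$. This breaks the $\R$-translation symmetry of the strip directly, so one counts genuinely isolated configurations rather than isolated-mod-translation points in a $1$-parameter family. Both constructions produce a module homotopy, but the paper's avoids the extra fibering and is a little more economical.

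For \eqref{eqn: module htopies 1}, your grafted-hedge idea matches the paper's, but there is a small gap in your description: once the portion of each tree below the dividing set is required to map into $K$, the root $y_i$ must sit on the $K$-boundary of the strip, i.e.\ at height $0$. Hence at $r=0$ you recover $CW^*(K,L)_{X,0}$, not $CW^*(K,L)_{X,c}$; the paper handles this by first invoking the first part to reduce to $c=0$. The paper also uses a \emph{single} grafting parameter (all dividing sets at common distance $|c|$ from their roots, with $c$ now ranging over $(-\infty,0]$), which keeps the interpolation one-dimensional; if the $r_i$ vary independently the faces $\{r_i=0\}$ and $\{r_i=\infty\}$ are not the two module structures you want. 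Finally, rather than compactifying at $r=\infty$ and matching boundary strata there, the paper argues by stabilization: below any fixed action bound, gluing theory furnishes a bijection between hedge maps for all sufficiently large finite $r$ and the broken configurations defining $i_K^\vee CW^*(K,L)_{K,0}$, so the structure maps (and the homotopies between successive values of $r$) eventually become constant. This sidesteps the delicate direct analysis of the $r=\infty$ limit that your last paragraph flags as the main difficulty.
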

\begin{remark}
	The key takeaway of Proposition \ref{prop: C* modules homotopic} is not just that $CW^*(K,L)$ has a canonically defined $C^*(X)$-module structure, but that this module structure is \emph{determined by} either the $C^*(K)$- or the $C^*(L)$-module structure.
\end{remark}
\begin{proof}
	Pick a smooth function $f\colon\R\to[0,1]$ interpolating between $f(s) = c_1$ for $s$ near $+\infty$ and $f(s)=c_2$ for $s$ near $-\infty$. Write $\mathcal H^d_f$ for the space of hedges with marked points $z_i$ on the graph of $f$. Counting isolated (no longer up to translation) hedge maps parametrized by $\mathcal H^d_f$ defines the homotopy \eqref{eqn: module htopy easy}.
	
	For the second part, we prove \eqref{eqn: module htopies 1}, since the proof of \eqref{eqn: module htopies 2} is identical. For this, we may apply the first part to assume $c=0$, so we need only produce a homotopy between $CW^*(K,L)_{X,0}$ and $CW^*(K,L)_{K,0}$. We do this by generalizing the notion of a hedge to that of a \emph{grafted hedge}. This is the same as Definition \ref{def: hedge}, except $f\equiv0$ and the short trees $T_i$ are replaced by short grafted trees. A (ordinary) hedge can thus be viewed as a special case of a grafted hedge, where all the dividing points are at the root $y_i$. Using this identification, we can extend the definition of the spaces $\mathcal H^d_c$ to negative values of $c$. Concretely, we declare $\mathcal H^d_c$ to be the space of $d$-leaved grafted hedges, where each tree is attached to the strip at $t=0$ and has dividing set at distance $\lvert c\rvert$ from the root. For negative $c$, $\mathcal H^d_c$ continues to have a natural compactification $\overline{\mathcal H}^d_c$, and there is a canonical diffeomorphism $\overline{\mathcal H}^d_c\cong\overline{\mathcal H}^d_{c'}$ for any $c,c'\in(-\infty,1]$.
	
	For $\mathbf H$ a grafted hedge, a hedge map out of $\mathbf H$ is a tuple $(u,\tau_1,\dotsc,\tau_k)$, where
	\begin{itemize}
		\item $u$ is a Floer trajectory with boundary on $(K,L)$.
		\item $\tau_i$ is a perturbed grafted gradient flow tree with leaves in $X$ and root in $K$ parametrized by $T_i$, and
		\item $\tau_i(y_i)=u(z_i)$.
	\end{itemize}
	Now the diffeomorphism $\overline{\mathcal H}^d_c\cong\overline{\mathcal H}^d_{c'}$ is compatible with both the internal stratification and the boundary decompositions, so follows that hedge maps parametrized by $\mathcal H^d_c$ continue to define $C^*(X)$-module structures $CW^*(K,L)_{X,c}$ for $c<0$. Moreover, the same argument as for nonnegative $c$ shows that these module structures are homotopic -- just interpolate the dividing sets rather than the attaching points.
	
	To conclude, observe that the pullback module $i_K^*CW^*(K,L)_{K,0}$ is what we get by sending the dividing set to infinity. While it is delicate to do that directly, it is enough to move the dividing set close to infinity: below any given action bound, gluing theory establishes a bijection of spaces of hedge maps. This ensures first that the module structure maps stabilize to the pulled-back ones, and second that the homotopies eventually become trivial.
\end{proof}

\begin{remark}\label{rem: Hochschild cohomology}
A version of proposition \ref{prop: C* modules homotopic} remains true with $C^*(X)$ replaced by symplectic cochains $SC^*(X)$, $C^*(K\text{ or }L)$ replaced by $CW^*(K\text{ or }L)$, and the restriction maps replaced by closed-open maps. In that case, one is forced to use \emph{left} $CW^*(L)$-modules. While we expect all of the resulting homotopies to be intertwined by the relevant $A_\infty$ algebra homomorphisms, sticking to Morse cochains allows us to avoid a good deal of combinatorial messiness.
	
Recall that for a Weinstein domain $X$, $SC^*(X)$ is quasi-isomorphic to the Hochschild cochains  $CC^*(\mathcal{W}(X))$ of $\mathcal{W}(X)$ \cite{Ganatra_thesis}. Using this quasi-isomorphism, we note that
Proposition \ref{prop: C* modules homotopic} has a purely categorical analog. For any $A_\infty$ category $\mathcal{A}$,  there is an $A_\infty$-homomorphism $CC^*(\mathcal{A}) \rightarrow \hom^*(X, X)$ and hence a pullback map on modules, i.e. 
\[
\pi_X: \mathrm{Mod}_{\mathrm{end}^*(X)} \rightarrow \mathrm{Mod}_{CC^*(\mathcal{A})}.
\]
Since $CC^*(\mathcal{A})$ is an $E_2$-algebra, there is also an $A_\infty$-homomorphism $CC^*(\mathcal{A}) \rightarrow \hom^*(X, X)^{op}$ and hence a similar pullback functor
\[
\overline{\pi}_X: \mathrm{Mod}_{\mathrm{end}^*(X)^{op}} \rightarrow \mathrm{Mod}_{CC^*(\mathcal{A})}.
\]
For any two objects $X, Y\in\mathcal{A}$, composition of morphisms in $\mathcal{A}$ makes  $\hom(X,Y)$ an object of $\mathrm{Mod}_{\mathrm{end}(X)}$ and also of $\mathrm{Mod}_{\mathrm{end}(Y)^{op}}$. Then the categorical analog of Proposition \ref{prop: C* modules homotopic} is that the objects $\pi_X \hom(X,Y)$ and $\overline{\pi}_Y \hom(X,Y)$ are quasi-isomorphic in $\mathrm{Mod}_{CC^*(\mathcal{A})}$.

For the actual statement in Proposition \ref{prop: C* modules homotopic}, we work with $C^*(X)$, the low-action part of $CC^*(\mathcal{W}(X))$, and need to identify $CC^*(\mathcal{W}(X)) \rightarrow \hom^*(L,L)$ with the restriction map $C^*(X) \rightarrow C^*(L)$ on Morse cochains. 
Here it is essential that our Lagrangian $L$ \textit{is not} equipped with a bounding cochain, which destroys the action filtration on Floer cochains and hence our access to the low-energy, topological subcomplex.
	\end{remark}

While so far we have considered general $A_\infty$ presentations of our Morse cochain complexes, the above constructions work just as well for their strict unitalizations $C^*_s(-)$. Indeed, suppose $X$ is connected, and pick a positive exhausting Morse function $f$ on $X$ with a unique degree $0$ critical point. Define
\[
C^*_s(X) := CM^{\ge1}(f)\oplus\mathbb Z\cdot\mathbf 1
\]
with the restricted $A_\infty$ structure on $CM^{\ge1}(f)$ (which is well-defined because $\mu^k$ increases reduced degree, which is non-negative by assumption) 
and for which $\mathbf 1$ is a strict unit. Any $A_\infty$ homomorphism
\[
C^*(X)\to\mathcal A
\]
for $\mathcal A$ a strictly unital $A_\infty$ algebra induces a strictly unital homomorphism
\[
C^*_s(X)\to\mathcal A.
\]
Because modules are just functors to the strictly unital dg-category $\mathbf{Ch}$, we conclude

\begin{corollary}\label{cor: strictly unital modules homotopic}
	If $X$, $K$, and $L$ are connected, then Proposition \ref{prop: C* modules homotopic} continues to hold in the realm of strictly unital modules with $C^*(X)$ replaced with $C^*_s(X)$, and similarly with $K$ and $L$.
	\qed
\end{corollary}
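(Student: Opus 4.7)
The plan is to apply the extension principle stated in the paragraph preceding the corollary directly to each ingredient of Proposition \ref{prop: C* modules homotopic}. First, I would recall that a right $C^*(X)$-module is the same datum as an $A_\infty$ functor from $C^*(X)$ into the strictly unital dg-category $\mathbf{Ch}$. By the cited observation, every such functor extends uniquely to a strictly unital $A_\infty$ functor out of $C^*_s(X)$, producing a canonical strictly unital right $C^*_s(X)$-module. Applied to each of $CW^*(K,L)_{X,c}$, $CW^*(K,L)_{K,0}$, and $CW^*(K,L)_{L,1}$, this gives the required strictly unital enhancements; the analogous reasoning applies with $K$ or $L$ in place of $X$.

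Second, I would transfer the three homotopies \eqref{eqn: module htopy easy}, \eqref{eqn: module htopies 1}, and \eqref{eqn: module htopies 2} to the strictly unital setting by the same procedure: each homotopy of $C^*(X)$-modules is itself an $A_\infty$ functor into a strictly unital dg-category (that of module structures on a cylinder complex, say), so the extension principle promotes it to a homotopy of strictly unital $C^*_s(X)$-modules. For \eqref{eqn: module htopies 1} and \eqref{eqn: module htopies 2}, which involve the pullback functors $i_K^\vee$ and $i_L^\vee$, one additionally notes that the restriction homomorphisms $i_K^*\colon C^*(X) \to C^*(K)$ and $i_L^*\colon C^*(X) \to C^*(L)$ themselves extend uniquely to strictly unital homomorphisms $C^*_s(X) \to C^*_s(K)$ and $C^*_s(X) \to C^*_s(L)$. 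The induced pullback on strictly unital modules then matches the strict unitalization of the pulled-back modules $i_K^\vee CW^*(K,L)_{K,0}$ and $i_L^\vee CW^*(K,L)_{L,1}$ from Proposition \ref{prop: C* modules homotopic}.

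There is no substantive obstacle here: every step is a formal consequence of the uniqueness of strictly unital extensions of $A_\infty$ homomorphisms into strictly unital targets, which is precisely what makes the coherences (associativity of module structures, naturality under pullback, and compatibility of homotopies) automatic in the strictly unital setting. In particular, no further holomorphic curve analysis is required; the corollary is purely a matter of transporting the algebraic output of Proposition \ref{prop: C* modules homotopic} along the canonical inclusions $C^*(-) \hookrightarrow C^*_s(-)$.
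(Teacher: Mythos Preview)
Your proposal is correct and follows the same approach the paper intends: the corollary is presented with only a \qed, and the preceding paragraph already supplies the argument you spell out---namely, that modules are functors into the strictly unital target $\mathbf{Ch}$, so the extension principle for $C^*(-)\to C^*_s(-)$ applies to module structures, pullbacks, and homotopies alike. Your write-up simply makes explicit what the paper leaves as an immediate consequence.
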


\begin{corollary}\label{cor: factoring trough the aug}
Let $M$ be a closed connected manifold. If the restriction $A_\infty$-homomorphism $i^*\colon C^*(T^*M;\mathbb Z)\to C^*(L)$ factors up to homotopy through the canonical augmentation,
\[
\begin{tikzcd}
C^*(T^*M;\mathbb Z)\ar[rr, "i^*"]\ar[dr, "\varepsilon_{\mathrm{can}}"] && C^*(L;\mathbb Z)\\
&\mathbb Z\ar[ur, "\eta"]
\end{tikzcd}
\]
then $CW^*(M,L)_{M,0}$ is isomorphic to a module in the image of
\[
\Tw \mathbb Z \subset \mathrm{Mod}_\mathbb Z \xrightarrow{\varepsilon_{\mathrm{can}}}\mathrm{Mod}_{C^*(M;\mathbb Z)}.
\]
\end{corollary}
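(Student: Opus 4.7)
The plan is to chain together the homotopies of Proposition~\ref{prop: C* modules homotopic} (in the strictly unital form of Corollary~\ref{cor: strictly unital modules homotopic}) with the hypothesized factorization of $i^*$. Concretely, Proposition~\ref{prop: C* modules homotopic} yields homotopies of $C^*(T^*M)$-modules
\[
i_M^\vee CW^*(M,L)_{M,0} \;\simeq\; CW^*(M,L)_{T^*M,c} \;\simeq\; i_L^\vee CW^*(M,L)_{L,1},
\]
and by the hypothesis $i^*\simeq\eta\circ\varepsilon_{\mathrm{can}}$ together with functoriality of pullback, the right-hand side becomes $\varepsilon_{\mathrm{can}}^\vee K$, where $K := \eta^\vee CW^*(M,L)_{L,1}$ is the underlying chain complex of $CW^*(M,L)$ viewed as an object of $\mathrm{Mod}_\mathbb Z$.

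Next I would exploit the fact that the zero-section inclusion $i_M\colon M\hookrightarrow T^*M$ is a homotopy equivalence. This has two consequences: first, $i_M^*\colon C^*(T^*M)\to C^*(M)$ is a quasi-isomorphism, so the pullback functor $i_M^\vee$ is a quasi-equivalence of module categories; second, the canonical augmentation $\varepsilon_{\mathrm{can}}\colon C^*(T^*M)\to\mathbb Z$ agrees up to homotopy with $\varepsilon_M\circ i_M^*$, where $\varepsilon_M\colon C^*(M)\to\mathbb Z$ is the canonical augmentation, since both are determined by picking out the unique degree-zero class of the connected space. Hence $\varepsilon_{\mathrm{can}}^\vee\simeq i_M^\vee\circ\varepsilon_M^\vee$, and the chain of homotopies above rearranges to
\[
i_M^\vee CW^*(M,L)_{M,0} \;\simeq\; i_M^\vee\bigl(\varepsilon_M^\vee K\bigr).
\]
Cancelling the quasi-equivalence $i_M^\vee$ then yields $CW^*(M,L)_{M,0}\simeq\varepsilon_M^\vee K$ in $\mathrm{Mod}_{C^*(M;\mathbb Z)}$, which is precisely the asserted image under the composition in the statement.

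Finally, to promote $K$ from an arbitrary $\mathbb Z$-module to an object of $\Tw\mathbb Z$, I would replace it by a cofibrant representative. Since $M$ and $L$ are compact Lagrangians that can be perturbed to intersect transversally in finitely many points, $CW^*(M,L)$ is quasi-isomorphic to a bounded chain complex of finitely generated free abelian groups, which by definition is an object of $\Tw\mathbb Z$; passing to such a representative does not change the module class of $\varepsilon_M^\vee K$.

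The main technical obstacle is step two: verifying the factorization $\varepsilon_{\mathrm{can}}\simeq\varepsilon_M\circ i_M^*$ of $A_\infty$-augmentations within the strictly unital conventions of Corollary~\ref{cor: strictly unital modules homotopic}. This is ultimately a formal consequence of the homotopy equivalence $M\simeq T^*M$, but care must be taken to set up compatible Morse data on $T^*M$ and $M$ so that the augmentations can be compared directly, or at least so that the comparison homotopy can be carried along in the pullback-of-modules argument.
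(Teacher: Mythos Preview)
Your proposal is correct and follows essentially the same route as the paper: chain the module homotopies of Proposition~\ref{prop: C* modules homotopic}/Corollary~\ref{cor: strictly unital modules homotopic} with the factorization hypothesis, then use that $C^*(T^*M)\to C^*(M)$ is an equivalence to conclude. The paper disposes of your ``main technical obstacle'' in one line by observing that, with the Morse-cochain models in use, the restriction $C^*(T^*M)\to C^*(M)$ is a literal \emph{isomorphism} (choose $f_{T^*M}$ so all critical points lie on $M$), so no separate comparison of augmentations is needed. One small slip: $L$ need not be compact; the finiteness of $CW^*(M,L)$ follows from compactness of $M$ alone.
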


\begin{proof}
	Replacing $C^*(-)$ by $C^*_s(-)$, we may assume all algebras and maps are strictly unital. In particular, the pullback functor
	\[
	\eta^*\colon\mathrm{Mod}_{C^*(L;\mathbb Z)}\to\mathrm{Mod}_{\mathbb Z}
	\]
	preserves strict unitality of modules. Since a strictly unital $\mathbb Z$-module is just a chain complex, the $\mathbb Z$-module $\eta^*(CW(M,L)_{L,1})$ coincides with its underlying chain complex, which lies in $\Tw \mathbb Z$ because $M$ is compact.
	
	The result now follows from Corollary \ref{cor: strictly unital modules homotopic} (on each connected component of $L$), together with the observation that the restriction $C^*(T^*M)\to C^*(M)$ is an isomorphism.
\end{proof}

\subsection{Disks in cotangent bundles}

In the previous section, we studied properties of Floer modules $CW^*(K, L)$ over various Morse cochain algebras. 
In this section, we restrict to the case of $T^*M$, where $M$ is a simply connected, spin manifold. We use Koszul duality to show that the module structure over $C^*(M)$ knows everything about the Fukaya category and prove Theorem \ref{thm: Lagrangian_homology_twisted}.

We first construct a presentation of the wrapped Fukaya category which is well-adapted to talking about modules over $C^*(M)$. First, write $\mathcal C$ for the semiorthogonally glued category
\[
\langle M^\mathrm{Morse}, \mathcal W(T^*M)\rangle,
\]
where $\mathrm{end}^*(M^\mathrm{Morse})=C^*_s(M)$, and $\hom^*_{\mathcal C}(M^\mathrm{Morse}, L) = CW^*(M,L)$. The mixed $A_\infty$ operations count generalized hedges, i.e. usual perturbed holomorphic disks whose first boundary lies geometrically on $M$, together with short perturbed gradient flow trees in $M$ attached at boundary marked points. We will obtain our desired presentation by localizing $\mathcal C$:

\begin{lemma}\label{lem:Morse_wrapped_cat}
  Let $e\in\mathrm{hom}^0(M^\mathrm{Morse},M)$ be a cocycle representing the unit in $CW^*(M,M)$. Define
  \begin{equation}
    \mathcal W^\mathrm{Morse}(T^*M) := \mathcal C / \mathrm{cone}(e),
  \end{equation}
  so that we have tautological functors
  \begin{equation*}
  \begin{tikzcd}
    \mathcal W(T^*M) \ar[r,"i_\mathcal{W}" above] & \mathcal W^\mathrm{Morse}(T^*M)
      & \mathrm{end}^*_{\mathcal C}( M^\mathrm{Morse} ) = C^*_s( M ) \ar[l, "i_M" above].
  \end{tikzcd}
  \end{equation*}
  Then $i_\mathcal{W}$ is a quasi-equivalence and $i_M$ is fully faithful.
\end{lemma}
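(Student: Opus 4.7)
My approach is to recognize that $\mathcal W^{\mathrm{Morse}}(T^*M) := \mathcal C/\mathrm{cone}(e)$ is exactly the localization of $\mathcal C$ at the morphism $e$, since killing the cone of a morphism in a pre-triangulated $A_\infty$-category amounts to inverting it. In particular, $M^{\mathrm{Morse}} \cong M$ via $e$ in $\mathcal W^{\mathrm{Morse}}(T^*M)$. This is the engine of the whole proof: essential surjectivity of $i_{\mathcal W}$ follows immediately, because the only object of $\mathcal C$ not already contained in $\mathcal W(T^*M)$ is $M^{\mathrm{Morse}}$, which is now isomorphic to $M$.

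Next I would prove that $i_{\mathcal W}$ is full and faithful by unwinding the Lyubashenko--Manzyuk formula for the quotient. Any new contribution to $\mathrm{hom}_{\mathcal W^{\mathrm{Morse}}}(L, K)$ for $L, K \in \mathcal W(T^*M)$ comes from a zig-zag factoring through $\mathrm{cone}(e)$. The semi-orthogonal structure built into $\mathcal C$ gives $\mathrm{hom}_{\mathcal C}(L, M^{\mathrm{Morse}}) = 0$, so the leftmost factor $\mathrm{hom}_{\mathcal C}(L, \mathrm{cone}(e))$ collapses to $CW^*(L, M)$; combined with the contractibility of $\mathrm{cone}(e)$ in the quotient, the total zig-zag sum is acyclic and only the original $CW^*(L, K)$ survives.

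For full faithfulness of $i_M$, the map to analyze is
\[
C^*_s(M) = \mathrm{hom}_{\mathcal C}(M^{\mathrm{Morse}}, M^{\mathrm{Morse}}) \longrightarrow \mathrm{hom}_{\mathcal W^{\mathrm{Morse}}}(M^{\mathrm{Morse}}, M^{\mathrm{Morse}}).
\]
Using the isomorphism $e: M^{\mathrm{Morse}} \cong M$ together with the full faithfulness of $i_{\mathcal W}$ just established, the target is quasi-isomorphic to $CW^*(M, M)$. Crucially, because $M$ is closed, no wrapping is needed on the zero section and $CW^*(M, M)$ agrees with ordinary Morse cochains $C^*(M)$. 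It then remains to identify the induced composite $C^*_s(M) \to C^*(M)$ with the standard strictification quasi-isomorphism; this amounts to checking that $e$, which by assumption represents the unit in $CW^0(M, M)$, is the image of the unit $\mathbf 1 \in C^*_s(M)$ under the action-on-$e$ map, a fact built into the hedge-based definition of the mixed hom-complexes in $\mathcal C$.

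The main obstacle I anticipate is formalizing the zig-zag vanishing in step two, since the Lyubashenko--Manzyuk formula comes with enough signs and shifts to easily obscure the argument. A cleaner alternative I would try in parallel is to bypass the quotient formula entirely: embed $\mathcal C \hookrightarrow \mathrm{Mod}_{\mathcal C}$ via Yoneda and identify $\mathcal W^{\mathrm{Morse}}(T^*M)$ with the full subcategory of modules on which $e$ acts as a quasi-isomorphism. Both claims then reduce to the observation that $e$ is automatically a quasi-isomorphism on the Yoneda module of every $K \in \mathcal W(T^*M)$, because under the semi-orthogonal identifications the action of $e$ on $\mathrm{hom}_{\mathcal C}(-, K)$ at the two objects $M^{\mathrm{Morse}}$ and $M$ is just the identity on $CW^*(M, K)$.
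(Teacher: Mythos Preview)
Your proposal is correct and arrives at the same conclusion using the same underlying ingredients, but the paper's packaging is considerably cleaner. Rather than unwind the Lyubashenko--Manzyuk bar formula, the paper argues directly via orthogonality of $\mathrm{cone}(e)$: for any $X\in\mathcal W(T^*M)$, precomposition with $e$ gives a quasi-isomorphism $\hom_{\mathcal C}(M,X)\to\hom_{\mathcal C}(M^{\mathrm{Morse}},X)$ (both sides are $CW^*(M,X)$ and $e$ represents the unit), so $\mathrm{cone}(e)$ is \emph{left}-orthogonal to all of $\mathcal W(T^*M)$, whence $i_{\mathcal W}$ is fully faithful by the standard fact about quotients by orthogonal objects. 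Essential surjectivity is as you say. For $i_M$, the paper observes dually that $\mathrm{cone}(e)$ is \emph{right}-orthogonal to $M^{\mathrm{Morse}}$, which is exactly the classical Lagrangian PSS isomorphism $C^*(M)\simeq CF^*(M,M)$. Your alternative Yoneda approach is essentially this orthogonality argument in disguise, and is closer to what the paper does than your primary zig-zag approach.

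One caution about your primary argument for full faithfulness of $i_{\mathcal W}$: the step ``combined with the contractibility of $\mathrm{cone}(e)$ in the quotient, the total zig-zag sum is acyclic'' is circular as written---contractibility \emph{in the quotient} is what you are trying to establish consequences of, not something you can invoke to compute the quotient hom. What actually makes the higher bar terms acyclic is that $\hom_{\mathcal C}(\mathrm{cone}(e),K)\simeq 0$ for $K\in\mathcal W(T^*M)$, i.e.\ the left-orthogonality the paper uses. Your observation that $\hom_{\mathcal C}(L,M^{\mathrm{Morse}})=0$ from the semiorthogonal gluing is true but not by itself sufficient. Your route for $i_M$ (transport along $e$ to $\mathrm{end}(M)$, then invoke full faithfulness of $i_{\mathcal W}$, then identify $CW^*(M,M)\simeq C^*(M)$) is fine and ultimately rests on the same PSS fact, just reached less directly.
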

\begin{proof}
  For any object $X\in\mathcal W(T^*M)$, precomposition with $e$ induces a quasi-isomorphism
  \[
    \hom_\mathcal{C}^*( M, X ) \cong \hom_\mathcal{C}^*( M^\mathrm{Morse}, X ).
  \]
  This means that $\mathrm{cone}( e )$ is left-orthogonal to every $X \in \mathcal W( T^*M )$, which implies that $i_\mathcal W$ is fully faithful. Because $i_\mathcal W(M)$ is isomorphic to $M^\mathrm{Morse}$ in $\mathcal W^\mathrm{Morse}( T^*M )$, $i_\mathcal W$ is also essentially surjective, which means it's an equivalence.

  The proof for $i_M$ is identical, except $\mathrm{cone}(e)$ is right-orthogonal to $M^\mathrm{Morse}$ by the classical Lagrangian PSS isomorphism.
\end{proof}

The benefit of $\mathcal W^\mathrm{Morse}(T^*M)$ is that it allows for direct Koszul duality between the Morse cochain algebra on the zero section and the wrapped Fukaya algebra of the fiber.
In particular, we do not have to transfer Corollary \ref{cor: factoring trough the aug} through Floer's isomorphism.

\begin{proposition}\label{prop: full_faithful_zero_section}
  If $M$ is a simply connected, spin manifold, then the restricted Yoneda functor
  \begin{equation*}
  \begin{tikzcd}
    \mathcal Y \colon \mathcal W^\mathrm{Morse}( T^*M ) \ar[r, "\mathrm{Yoneda}"] &
    \mathrm{Mod}_{\mathcal W^\mathrm{Morse}( T^*M )} \ar[r, "i_M^*"] &
    \mathrm{Mod}_{C^*(M)}
  \end{tikzcd}
  \end{equation*}
  is fully faithful.
\end{proposition}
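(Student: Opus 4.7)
The plan is to reduce full faithfulness of $\mathcal Y$ to a computation on a two-object split-generating subcategory, and then to identify the induced map on the crucial entry with classical Koszul duality for $C^*(M)$. By Abouzaid's generation theorem \cite{abouzaid_cotangent_fiber} together with Lemma \ref{lem:Morse_wrapped_cat}, the object $F := T^*_qM$ split-generates $\mathcal W^\mathrm{Morse}(T^*M)$. Since $\mathcal Y$ is an $A_\infty$-functor into a pretriangulated and idempotent-complete target, it suffices to verify full faithfulness on the full subcategory $\mathcal B$ with objects $\{M^\mathrm{Morse}, F\}$, and I would handle the resulting $2\times2$ table of morphism spaces case by case.

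Three of the four entries are essentially bookkeeping. By construction, $\mathrm{end}^*(M^\mathrm{Morse}) = C^*_s(M) \simeq C^*(M)$, and $\mathcal Y(M^\mathrm{Morse}) = C^*(M)$ is the free rank-one module. The single transverse intersection of $M$ and $F$ gives $\hom^*(M^\mathrm{Morse}, F) = CW^*(M,F) \simeq \mathbb Z$ in degree zero, and $\mathcal Y(F) = \mathbb Z$ inherits the canonical augmentation $C^*(M)$-module structure tautologically. The ordinary Yoneda lemma for modules then identifies $\mathrm{end}^*(M^\mathrm{Morse})$, $\hom^*(M^\mathrm{Morse}, F)$, and $\hom^*(F, M^\mathrm{Morse})$ with the $\mathrm{RHom}$'s $C^*(M)$, $\mathbb Z$, and $\mathbb Z[-n]$ respectively (the last combining one transverse intersection on the geometric side with Poincar\'e duality $\mathrm{RHom}_{C^*(M)}(\mathbb Z, C^*(M)) \simeq \mathbb Z[-n]$ on the algebraic side, which is available since $M$ is a simply connected closed manifold).

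The main work is in the entry $\mathrm{end}^*(F)$. On the source side, Abouzaid's loop space theorem \cite{Abouzaid_based_loops} identifies $CW^*(F, F) \simeq C_{-*}(\Omega M)$, using the spin hypothesis for Floer orientations; on the target side, $\mathrm{RHom}_{C^*(M)}(\mathbb Z, \mathbb Z) \simeq C_{-*}(\Omega M)$ by the classical Koszul duality for simply connected spaces, computed via the cobar construction. The main obstacle is showing that the map induced by $\mathcal Y$, which sends $\alpha \in CW^*(F,F)$ to the composition action $\alpha \circ (-)$ on $CW^*(M,F) \simeq \mathbb Z$ together with its higher $A_\infty$ coherences, coincides with this Koszul dual quasi-isomorphism. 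The geometric input is that the composite operations sending the generator of $CW^*(M,F)$ through multiplication by $\alpha$ and through the $C^*(M)$-module structure are computed by perturbed holomorphic polygons with short Morse trees attached to the $M$-boundary, as in Section \ref{sec:C*_mod}; these unpack precisely into the cobar operations that define $\mathrm{RHom}_{C^*(M)}(\mathbb Z, \mathbb Z)$, and simple connectivity is what guarantees that this cobar recovers $C_{-*}(\Omega M)$ rather than a completion. Once full faithfulness on $\mathcal B$ is established, it propagates to the triangulated hull and its split closure, which is all of $\mathcal W^\mathrm{Morse}(T^*M)$.
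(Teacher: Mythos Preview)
Your overall strategy is sound and close in spirit to the paper's, but the route and the crucial step differ. The paper does not compute the $2\times2$ table directly. Instead it transports the whole question through Abouzaid's equivalence $\Tw\mathcal W^\mathrm{Morse}(T^*M)\simeq\Tw C_{-*}(\Omega M)$, argues by a short degree/connectivity check (using $H_0(\Omega M)\cong\mathbb Z$ from simple connectedness) that $M^\mathrm{Morse}$ lands on the \emph{canonical} augmentation, and then invokes classical Koszul duality between the free rank-one module and the augmentation of $C_{-*}(\Omega M)$. Full faithfulness of $\mathcal Y=\hom(M^\mathrm{Morse},-)$ is then exactly that Koszul duality statement. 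This packaging avoids having to match any Floer operations with algebraic cobar operations by hand.

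Your proposal, by contrast, stays on the $C^*(M)$-module side and tries to verify the quasi-isomorphism on $\mathrm{end}^*(F)$ directly. The gap is precisely there. You correctly identify both source and target with $C_{-*}(\Omega M)$ abstractly, but knowing two objects are abstractly isomorphic does not show that the \emph{particular} map induced by $\mathcal Y$ is a quasi-isomorphism. Your sentence ``these unpack precisely into the cobar operations that define $\mathrm{RHom}_{C^*(M)}(\mathbb Z,\mathbb Z)$'' is an assertion, not an argument: you would need to exhibit a chain-level identification of the hedge-type $A_\infty$ structure on the pair $(M^\mathrm{Morse},F)$ with the algebraic bar/cobar model, and then check that under this identification the Yoneda map becomes Adams's comparison map. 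That is substantially more work than what the paper does, and it duplicates the content of Abouzaid's theorem rather than using it. A smaller version of the same issue appears in your treatment of $\hom^*(F,M^\mathrm{Morse})$: both sides being $\mathbb Z[-n]$ does not by itself show the induced map is nonzero; one still has to observe that the composition pairing hits the fundamental class. If you want to salvage your approach, the cleanest fix is to note that $F$ alone split-generates, so only the $\mathrm{end}^*(F)$ entry matters, and then to argue as the paper does: transport along Abouzaid's equivalence, identify $\mathcal Y(F)=\mathbb Z$ as the canonical augmentation (this is the degree argument you are missing), and cite Adams.
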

\begin{remark}
  Note that we have used the full-faithfullness of $i_M$ from Lemma \ref{lem:Morse_wrapped_cat} to write $C^*(M)$ rather than $\mathrm{end}^*(M^\mathrm{Morse})$.
  At the level of objects, $\mathcal Y$ just sends $L$ to $CW^*(M, L)_{M,0}$.
\end{remark}
\begin{proof}
  Lemma \ref{lem:Morse_wrapped_cat} and Abouzaid's theorems \cite{Abouzaid_based_loops, abouzaid_cotangent_fiber} give us a chain of quasi-equivalences
  \begin{equation*}
    \begin{tikzcd}[cramped, sep=small]
    \mathrm{Tw}\,\mathcal W^\mathrm{Morse}(T^*M) \ar[r, "\cong" above] \ar[rrr, bend right=10, "F"] &
    \mathrm{Tw}\,\mathcal W(T^*M) \ar[r, "\cong" above] &
    \mathrm{Tw}\,( \mathrm{end}^*(T_q^*M) ) \ar[r, "\cong" above] &
    \mathrm{Tw}\,( C_{-*}( \Omega M )).
  \end{tikzcd}
  \end{equation*}
  The resulting functor $F$ sends the cotangent fiber $T^*_qM$ to the rank $1$ free module.

  Let us study what happens to $M^\mathrm{Morse}$. We know $CW^*(M^\mathrm{Morse}, T_q^*M) \cong \mathbb Z$, since the zero section and fiber have just one intersection point. This means that $F(M^\mathrm{Morse})$ is an augmentation, and in fact it is the canonical augmentation of $C_{-*}(\Omega M)$. Indeed, all $C_{-*}(\Omega M)$-modules whose cohomology is $\mathbb{Z}$ are quasi-isomorphic. To see, use the homological perturbation lemma to replace $C_{-*}(\Omega M)$ with its cohomology $H_{-*}(\Omega M)$. This is supported in non-positive degrees and, because $M$ is simply connected, has $H_0(\Omega M) \cong \mathbb{Z}$. Since the $A_\infty$-module operation 
  \[\mu^{k \vert 1}\colon H_{-*}(\Omega M)^{\otimes k} \otimes \mathbb{Z} \rightarrow \mathbb{Z}\]
  has degree $1-k$ and $H_{-*}(\Omega M)$  is supported in non-positive degrees, the only non-trivial $A_\infty$-operation is the product $\mu^{1\vert1}: H_0(\Omega M)  \otimes \mathbb{Z} \rightarrow \mathbb{Z}$; this is the identity operation.

By \cite{Adams_cobar}, the standard augmentation and the rank 1 free module of $C_{-*}(\Omega M)$ are Koszul dual if $M$ is simply-connected, so $M^\mathrm{Morse}$ is Koszul dual to $T_q^*M$ and the proposition follows.
\end{proof}

\begin{remark}
	Simply-connectedness and $\mathbb Z$-grading are standard essential ingredients for Koszul duality. The spin condition also seems essential in our proof, but we do not have an example showing that Proposition \ref{prop: full_faithful_zero_section} fails without it. 
\end{remark}

We now have the necessary ingredients to prove Theorem \ref{thm: Lagrangian_homology_twisted}.

\begin{proof}[Proof of Theorem \ref{thm: Lagrangian_homology_twisted}]
 We now turn to the Lagrangian $L \subset T^*M$. The hypothesis that $L$ is null-homotopic in $T^*M$ implies that the hypothesis of Corollary \ref{cor: factoring trough the aug} is satisfied. This means that, up to isomorphism, $\mathcal Y( L )$ is the finite-dimensional cochain complex $CW^*(M,L)$, i.e. a complex of standard augmentations. On the other hand, the same reasoning (or a direct appeal to Corollary \ref{cor: strictly unital modules homotopic}) shows that $\mathcal Y( T_q^*M )$ is itself a standard augmentation and hence $\mathcal Y (CW^*(M, L) \otimes T^*_q M)$ is also the finite-dimensional cochain complex $CW^*(M,L)$. 
 Since $\mathcal Y$  is full and faithful by Proposition \ref{prop: full_faithful_zero_section}, $L$ is quasi-isomorphic to $CW^*(M, L) \otimes T^*_q M$ as desired.
 \end{proof}

	\bibliographystyle{abbrv}
	\bibliography{sources}

\end{document}